\def\R{{\mathbb{R}}}
\def\CC{{\mathbb C}}
\def\<#1>{\langle #1\rangle}
\newcommand{\qm}[1]{\operatorname{QM}(#1)}
\newcommand{\I}[1]{\operatorname{I}\left(#1\right) }
\def\RR{{\mathbb R}}
\def\rank{{\operatorname{rank}}}
\def\fn{{\mathfrak{n}}}
\def\ol{\overline}
\def\bS{\mathbf{S}}
\def\bI{\mathbf{I}}
\def\cS{{\mathcal{S}}}
\def\fm{{\mathfrak{m}}}
\def\NN{{\mathbb N}}
\def\Res{{\mathrm{Res}}}
\def\cF{{\mathcal{F}}}
\def\cL{\mathcal{L}}
\def\cI{{\mathcal{I}}}
\def\mQ{\mathcal{Q}}
\def\cK{{\mathcal{K}}}
\def\LT{{\operatorname{LT}}}
\def\codim{{\operatorname{codim}}}
\def\fc{{\mathfrak{c}}}
\providecommand{\pacs}[1]{\textbf{\textit{MSC Classification---}} #1}
\numberwithin{equation}{section}
\theoremstyle{plain}
\newtheorem{lemma}{Lemma}
\newtheorem{theorem}{Theorem}
\newtheorem{corollary}{Corollary}
\newtheorem{ass}{Assumption}
\newtheorem{prop}{Proposition}
\theoremstyle{definition}
\newtheorem{definition}{Definition}
\theoremstyle{remark}
\newtheorem{example}{Example}
\newtheorem{remark}{Remark}
\title[Exactness and Effective Degree Bound of Lasserre's Relaxation]{Exactness and Effective Degree Bound of Lasserre's Relaxation for Polynomial Optimization over Finite Variety}
\author{Zheng Hua $^{\dagger}$}
\address{$^\dagger$Department of Mathematics\\
        The University of Hong Kong\\ Hong Kong SAR, China.}
        \address{$^\ddag$School of Mathematical Sciences\\
        Shenzhen University\\Shenzhen, China.}
\email[Zheng~Hua]{$^{1}$ zheng.hua.zju@gmail.com}
\author{Zheng Qu $^{\ddag, \ast}$}
\email[Corresponding author]{$\ddag$ quzheng2003@gmail.com}
\address{$^{\ast}$Corresponding author}
\begin{document}


	


\date{}

\begin{abstract}In this paper, we address the effective degree bound problem for Lasserre's hierarchy of moment-sum-of-squares (SOS) relaxations in polynomial optimization involving $n$ variables. We assume that the first $n$ equality constraint polynomials 
$g_1,\ldots,g_n$ do not share any nontrivial common complex zero locus at infinity and that the optimal solutions are nonsingular. Under these conditions, we derive an effective degree bound for the exactness of Lasserre's hierarchy. Importantly, the assumption of no solutions at infinity holds on a Zariski open set within the space of polynomials of fixed degrees. As a direct consequence, we provide the first explicit degree bound for gradient-type SOS relaxation under a generic condition.
\end{abstract}
\maketitle

\textbf{\textit{Keywords---}}{Lasserre's Hierarchy, Polynomial Optimization, Degree Bound, Sum of Squares Representation, Regular Sequence}

\pacs{90C23, 68Q25, 90C60}

\section{Introduction}

In a seminal work (\cite{Lasserre2001}), Lasserre  proposed  a hierarchy of convex relaxations to approximate the 
global minimum of a polynomial function over {a} compact set defined by polynomial inequalities. This hierarchy provides a sequence of monotonically converging lower bounds, which are optimal values of convex semidefinite programming (SDP) problems and thus computationally tractable.
Increasing the relaxation order  naturally leads to  tighter approximation, at the price of solving SDP programs of larger size. From the point of view of complexity analysis, it is  important to establish bounds on the order for which the relaxation  is tight, i.e., when the lower bound is equal to the global optimal value. 

\subsection{Background}\label{sec:bac}
We consider in this paper the following polynomial optimization problem:
\begin{equation}\label{eq:1}
 \begin{array}{lll}
 &\displaystyle \min_{x\in \R^n}~ & f(x)  \\
 ~~~~~~~~&\mathrm{s.t.} &   g_1(x)=\cdots=g_k(x)=0{,} \\
 && h_1(x)\geq 0,\ldots, h_m(x)\geq 0{,}
 \end{array}
\end{equation}
where  $f, g_1,\ldots,g_k, h_1,\ldots,h_m$ are  polynomial functions of $n$ variables. We consider the case {where} the complex variety $\{z\in \CC^n: g_1(z)=\cdots=g_k(z)=0\}$ is nonempty and contains finitely many points. Note that this implies $k\geq n$. Moreover we assume that the feasible region is not empty and denote by $f_*$ the optimal value of~\eqref{eq:1}. 

We recall some notations used to define the hierarchy of relaxations.  We denote by $\R[x_1,\ldots,x_n]$ the ring of polynomials in variables $x_1,\ldots,x_n$ with real coefficients. Denote by $\<g_1,\ldots,g_k>$ the ideal of $\R[x_1,\ldots,x_n]$ generated by $g_1,\ldots,g_k$:
$$
\<g_1,\ldots,g_k>:=\left\{ \sum_{i=1}^k \lambda_i g_i : \lambda_i\in \R[x_1,\ldots,x_n],\enspace \forall i=1,\ldots,k \right\}.
$$
  A polynomial $\sigma$ is a \textit{sum of squares} (SOS) if it is written as a finite sum of squares of polynomials, i.e., $\sigma=q_1^2+\cdots+q_k^2$ where $q_1,\ldots,q_k$ are polynomials.  
The set of all SOS polynomials in variables $x_1,\ldots,x_n$ is denoted
by $\Sigma [x_1,\ldots,x_n]$. Let $h_0$ denote the constant polynomial function: $h_0(x)\equiv 1$. 
The \textit{quadratic module} generated by $h_0,h_1,\ldots,h_m$ is defined as:
$$
\qm{h_0,\ldots,h_m}:=\left\{
\sum_{i=0}^m \sigma_ih_i: \sigma_i \in \Sigma[x_1,\ldots,x_n],\enspace\forall i=0,\ldots,m
\right\}{.}
$$
Let $\deg(p)$ denote the degree of a polynomial function $p$.
For a positive integer $d$,
the $d$th truncation of the ideal $\<g_1,\ldots,g_k>$ is defined as:
$$
\I{g_1,\ldots,g_k}_{d}:=\left\{ \sum_{i=1}^k \lambda_i g_i : \lambda_i\in \R[x_1,\ldots,x_n],  \deg(\lambda_i g_i) \leq d,\enspace \forall i=1,\ldots,k \right\},
$$
and the $d$th truncation of the quadratic module $\qm{h_0,\ldots,h_m}$ is defined as:
$$
\qm{h_0,\ldots,h_m}_{d}:=\left\{
\sum_{i=0}^m \sigma_i h_i: \sigma_i \in \Sigma[x_1,\ldots,x_n], \deg(\sigma_i h_i)\leq d,\enspace \forall i=0,\ldots,m
\right\}.
$$
{Denote \begin{equation}\label{eq:Qd}
\mQ_d:=\I{g_1,\ldots,g_k}_{d}+ \qm{h_0,\ldots,h_m}_{d} .
\end{equation}
Note that $\mQ_d$ is a convex cone in the real vector space $\R[x_1,\ldots,x_n]_d:=\{p\in \R[x_1,\ldots,x_n]: \deg(p)\leq d\}$. Denote by $(\R[x_1,\ldots,x_n]_d)^*$  the dual space of $\R[x_1,\ldots,x_n]_d$ and let $\cL_d$ be the dual cone of $\mQ_d$:
\begin{equation}\label{eq:cL}
\cL_d:=\{\Lambda \in (\R[x_1,\ldots,x_n]_d)^*: \langle \Lambda,q \rangle \geq 0,\enspace \forall q\in \mQ_d\}\, .
\end{equation}
}
{
Let
$$
d_0:=\left\lceil\max\left(\deg(f),\enspace\max_{i=1,\ldots,k}  \deg(g_i),\enspace\max_{i=1,\ldots,m}\deg(h_i)\right)/2\right\rceil.
$$
}
For a positive
integer {$d\geq d_0$}, 
Lasserre's  sum of squares relaxation of order $d$ of~\eqref{eq:1}  amounts to finding the value ${f^{sos}_d}$ of the following problem (\cite{Lasserre2001}):
\begin{equation}\label{eq:13}
\left\{ \begin{array}{lll}
 \displaystyle {f^{sos}_{d}}:=&\displaystyle\sup_{c\in \R} & c  \\
 ~~~~~~~~&\mathrm{s.t.} &  f-c \in {\mQ_{2d}\, .} 
 \end{array}\right.
\end{equation}
{
The standard Lagrange dual problem of~\eqref{eq:13} can be written as follows:
\begin{equation}\label{eq:d-mom}
\left\{ \begin{array}{rl}
f^{mom}_{d}:=\inf & \langle \Lambda,f \rangle \\
\mathrm{s.t.} &   \Lambda\in \cL_{2d},\enspace \<\Lambda, h_0> = 1\, ,
\end{array} \right.
\end{equation}
known as the moment relaxation of order $d$ of~\eqref{eq:1}. 
}
{It is clear that}
$
{f^{sos}_d} \leq {f^{sos}_{d+1}}\leq f_*$  {and $f^{mom}_d\leq f_{d+1}^{mom}\leq f_*$} for all $ {d\geq d_0}$. 
The relaxation{s~\eqref{eq:13}-\eqref{eq:d-mom} are referred to as the} \textit{hierarchy of {moment-SOS} relaxations} {for the polynomial optimization problem~\eqref{eq:1}}.    {They} can be written equivalently as {a pair of primal and dual} SDP problem{s}~(\cite{Lasserre2001}),  in which the maximal size of the positive semidefinite matrices is   $\binom{n+d}{d}\times \binom{n+d}{d} $. 
{The weak duality $f_d^{sos}\leq f^{mom}_{d}$ holds for any $d\geq d_0$.  Various sufficient conditions for the strong duality $f_d^{sos}= f^{mom}_{d}$ to hold have been proposed in the literature, including the non-empty interior condition~(\cite{Lasserre2001}), and the inclusion of the ball constraint~(\cite{JoszHenrion}). }
Even if ${f^{sos}_d}$ is finite {and the strong duality holds},  it may not be achievable.  
A classical  ``bad" example is constructed by considering the case when $n=k=1$, $m=0$, $g_1(x)=x^2$ and $f(x)=x$. In this case, ${f^{sos}_1=f^{mom}_1=}f_*=0$ but ${f^{sos}_1}$ is not achievable. 
See~\cite[Remark 1]{Parrilo02anexplicit}. { We refer to~\cite{baldi2022} and~\cite{BALDI2024102403} for a collection of examples that present possible pathological behaviors in the hierarchy of moment-SOS relaxations.} 

{The exactness of the SOS relaxation~\eqref{eq:13} is related with the problem of representing nonnegative polynomials with sum of squares, while the exactness of the moment relaxation~\eqref{eq:d-mom} is related with the problem of characterizing truncated multi-sequences having representing measures. }
In this paper, we consider the \textit{effective degree bound} problem for {the exactness of} Lasserre's {moment-}SOS hierarchy of relaxations. {This} concerns establishing an effective bound on $d$ such that 
\begin{equation}\label{a:esdgger}
\begin{aligned}
&f-f_* \in \I{g_1,\ldots,g_k}_{2d}+ \qm{h_0,\ldots,h_m}_{2d} \, ,
\end{aligned}
\end{equation}
 {which} is the same as requiring that the optimal value ${f^{sos}_d}$ of the SOS relaxation problem~\eqref{eq:13} is equal to $f_*$ and the optimal value {$f^{sos}_d$}  of~\eqref{eq:13} {can be achieved}.  {On the dual side, we aim at establishing an effective bound on $d$ such that  an optimal solution of the  polynomial optimization problem~\eqref{eq:1} can be recovered from an optimal solution of the $d$th order moment relaxation~\eqref{eq:d-mom}.  }

\subsection{Related work}

We first mention the work related with the SOS relaxation~\eqref{eq:13}.
Using Putinar's Positivstellensatz (\cite{Putinar93}),~\cite{Lasserre2001} proved the \textit{asymptotic convergence} of Lasserre's hierarchy, i.e.  
$$
\lim_{d\rightarrow +\infty} {f^{sos}_d} =f_*,
$$
under the Archimedean condition, see also~\cite{Schweighofer05}.
The \textit{finite convergence} property {of the SOS relaxation~\eqref{eq:13}}, i.e., the existence of an integer $d$ such that 
\begin{equation}\label{a:wddf} 
{f^{sos}_d}=f_*
\end{equation}
has attracted a lot of attention. \cite{Laurent07} showed that~\eqref{a:wddf}  holds for some integer $d$  when the complex variety $\{z\in \CC^n: g_1(z)=\cdots=g_k(z)=0\}$ has finitely many points.  \cite{NieReal} established that~\eqref{a:wddf} holds for some integer $d$ when the real variety $\{x\in \RR^n: g_1(x)=\cdots=g_k(x)=0\}$ has finitely many points. 
For polynomial optimization over compact semi-algebraic set, \cite{Nie14} showed that~\eqref{a:wddf}  holds for some  integer $d$ when the linear constraint qualification condition, second-order sufficient optimality condition and strict complementary slackness condition  hold at any optimal solution point. These conditions are  \textit{generic}, which means that they hold except on a Lebesgue measure zero in the space of coefficients of input polynomials.  Finite convergence for gradient/KKT type SOS relaxation has also been established in a series of papers~(\cite{NieDemmelSturmfels06,DEMMEL2007189,Nie13Jacobian,nie2019tight}), under conditions which hold generically. 
There are also many works devoted to the estimation of the gap $f_*- {f^{sos}_d}$, see~\cite{NIESchweighofer,KORDA20171,Klerk2019, FangFawzi21,Mai2022}, {\cite{Slot22,Baldi23,BaldiSlot24,laurent2024convergenceratessumssquares}}.   

Note that~\eqref{a:esdgger} is stronger than~\eqref{a:wddf}. \cite{Parrilo02anexplicit}
proved that~\eqref{a:esdgger} holds for some $d$  if $\<g_1,\ldots,g_k>$ is zero-dimensional and radical.   \cite{Marshall06} showed that~\eqref{a:esdgger} holds when $\<g_1,\ldots,g_k>$ is {real} radical and the boundary hessian condition holds.  Explicit degree bound for~\eqref{a:esdgger} to hold  is only known in special circumstances.  The first effective degree bound for~\eqref{a:esdgger} was established for the special case when the  feasible region is a grid, first by~\cite{Lasserre02} and later improved by~\cite{Laurent07} (see~\Cref{subsec:cwp} for more details).  In the  special case of binary optimization,~\cite{Fawzi} 
proved that~\eqref{a:esdgger} holds for any $d\geq \lceil{n/2} \rceil$ if $f$ is quadratic and homogeneous. 
This bound  has been extended to $d\geq \lceil{(n+r-1)/2}\rceil$ for any polynomial $f$ of degree less than or equal to $r$ and to $d\geq \lceil{(n+r-2)/2}\rceil$ 
if in addition $f$ only contains monomials of even degree, by~\cite{SakaueTakeda17}.  

{In the study of the moment relaxation \eqref{eq:d-mom}, research has focused mainly on the flat extension condition (\cite{Laurent04}) and the flat truncation condition (\cite{Nie2013}). These conditions are crucial as they not only confirm the finite convergence of the moment relaxation, i.e., $f^{mom}_d=f_*$, but also enable the extraction of minimizers from the moment matrix associated with the optimal solution $\Lambda$. \cite{LasserreLaurentRostalski08} explored cases where there are no inequality constraints and the real variety $\{x\in \R^n: g_1(x)=\cdots=g_k(x)=0\}$ is finite. They showed that the flat truncation condition holds and that the generators of the real radical ideal of $\langle g_1,\ldots,g_k \rangle$ can be computed from the kernel of the moment matrix associated with $\Lambda$ in the relative interior of $\{\Lambda \in \cL_{d}: \<\Lambda, h_0>=1\}$, for sufficiently large $d$. Similar results are suggested to apply in cases with inequality constraints, as noted by \cite{Laurent2012}. \cite{Nie2013} demonstrated the equivalence between finite convergence and the flat truncation property under certain generic conditions, assuming the optimal solution of \eqref{eq:13} is achievable and strong duality holds. More recently, \cite{BALDI2024102403} established that flat truncation occurs if and only if the support of the quadratic module associated with the minimizers is of dimension zero. Furthermore, \cite[Theorem 3.19]{BALDI2024102403} revealed the strong connection between the order at which the flat truncation condition occurs and the finite convergence degree. 
}

\subsection{Contribution}
Throughout the paper we make the following assumption. 
\begin{ass}\label{ass:ginfty}
There are at least $n$  equality constraints ($k\geq n$) and the equations
$$
 g_1^{\infty}(z)=\cdots=g_n^{\infty}(z)=0
$$
{have} no solution in $\CC^n \backslash \{\bf{0}\}$.\end{ass}
\noindent Here $g_i^{\infty}$ denotes the highest degree homogeneous part of $g_i$, see~\Cref{sec:pre} for a more precise definition. 
Our main result is stated below. 
\begin{theorem}\label{thm:main}
Under~\Cref{ass:ginfty}, if for any optimal solution $x^*$ of~\eqref{eq:1} we have
\begin{equation}\label{a:aradf}
\rank\left (  \left[\begin{array}{lll}\nabla g_1(x^*) & \cdots & \nabla g_k (x^*)\end{array} \right ]\right)=n, 
\end{equation}
then
there exist $\lambda_1,\ldots,\lambda_k,q_1,\ldots,q_{m+2}\in \R[x_1,\ldots,x_n]$ such that
$$
f-f_*= q_1^2+q_2^2+\sum_{i=1}^k \lambda_i g_i+\sum_{j=1}^m q^2_{j+2} h_j,
$$
and  
\begin{equation}\label{a:degd0}
\begin{aligned}
&\max\left( \deg(q^2_1+q^2_2),\max_{i=1,\ldots,k}\deg(\lambda_i g_i),\max_{j=1,\ldots,m} \deg(q^2_{j+2}h_j) \right)  
\\ &\leq \max\left( \deg(f), \fn+d_g, 2\fn+d_h\right).
\end{aligned}
\end{equation}
\end{theorem} 
In the above,
\begin{equation}\label{a:fn}\fn:=\sum_{i=1}^n \deg({g_i})-n{\, ,}
	\end{equation}
    \begin{equation}\label{a:dg0}
d_g:=\left\{\begin{array}{ll}
\max(\deg(g_{n+1})\, ,\ldots,\deg(g_k)) & \enspace \mathrm{if~} k\geq n+1\\
 0 &\enspace \mathrm{else}
\end{array}\right. \, ,
\end{equation}
and
\begin{equation}\label{a:dh}
d_{h}:=\left\{\begin{array}{ll}
\max(\deg(h_{1}),\ldots,\deg(h_m)) & \enspace \mathrm{if~} m\geq 1\\
 0 &\enspace \mathrm{else}
\end{array}\right. \,\, .
\end{equation}
\Cref{thm:main} implies that under~\Cref{ass:ginfty} and the rank condition~\eqref{a:aradf}, the condition~\eqref{a:esdgger} holds for some 
\begin{equation}\label{a:degd}
d\leq \lceil\max\left( \deg(f)/2, (\fn+d_g)/2, \fn+d_h/2\right) \rceil.
\end{equation} 
The rank condition~\eqref{a:aradf} in~\Cref{thm:main} is weaker than the radical condition imposed in~\cite{Parrilo02anexplicit}. 
We achieved this using the structure theorem for Artinian ring. A byproduct of our approach is a control on the number of squares in~\eqref{a:esdgger}: we proved that under the rank condition~\eqref{a:aradf}, ~\eqref{a:esdgger} can hold with $\sigma_0$ being the sum of at most two squares.

{In~\Cref{thm:flatorder}, we deduce the same type of effective degree bound for the flat truncation condition under~\Cref{ass:ginfty}, by applying the duality connection in \cite[Theorem 3.19]{BALDI2024102403}. In~\Cref{thm:realradical}, under~\Cref{ass:ginfty} we establish an effective version of the results in~\cite{LasserreLaurentRostalski08} for semidefinite characterization of real radical ideal.}

 The degree bound~\eqref{a:degd} is an explicit expression which only depends  on  the degrees of the defining polynomials $\deg(f),\deg(g_1),\ldots,\deg(g_k),\deg(h_1),\ldots,\deg(h_m)$. 
\Cref{ass:ginfty}  appears naturally in the problem of optimization over critical locus (see~\eqref{eq:gradient}). In particular, applying~\Cref{thm:main} to~\eqref{eq:gradient}, we establish the first explicit degree bound for SOS relaxation over gradient ideal, under the same conditions as in~\cite{Nie13Jacobian} for ensuring the existence of $d$ such that~\eqref{a:esdgger} or~\eqref{a:wddf} holds. 

\Cref{ass:ginfty} is an open condition which holds generically, see~\Cref{a:gofa} for further discussion. When there are exactly $n$ equalities and there is no inequality constraint (i.e., $k=n$ and $m=0$), the degree bound~\eqref{a:degd} reduces to: 
$$
 \left\lceil \max\left(\left(\sum_{i=1}^n \deg(g_i)-n\right),\deg(f)/2\right) \right \rceil.
$$
Prior to our work, the same effective degree bound was established for the grid case {by~\cite{Laurent07}}.  Our work shows that the same degree bound actually holds for a much larger class of problems. Indeed, fixing the degrees of $\{g_1,\ldots,g_n\}$ to be $r$. Then the equivalent class of grid problems has dimension $O(nr)$, while the equivalent class of problems satisfying~\Cref{ass:ginfty} has dimension $O(n^r)$, see~\Cref{subsec:cwp} for more details.

\subsection{Organization}

 This paper is organized as follows. In~\Cref{sec:pre} we fix  notations and present some results in commutative algebra that are needed in the paper. In~\Cref{sec:square} we give the nonsingularity condition which guarantees that~\eqref{a:esdgger} holds for some $d$. 
 In~\Cref{sec:cia} we establish the effective degree bound~\eqref{a:degd} under~\Cref{ass:ginfty}. {In~\Cref{sec:ddual} we present analogue degree bound results for the dual moment problem.} In~\Cref{sec:gsos} we apply the main result to gradient type SOS. In~\Cref{sec:cwp} we provide some examples satisfying~\Cref{ass:ginfty} and make some discussions on this assumption. In~\Cref{sec:perspec} we conclude and provide some perspectives on future research.  In {the} appendix, we present some deferred proofs.

\section{Notations and preliminaries}\label{sec:pre}

In this section, we fix notations and state two theorems in commutative algebra that are needed for deriving the main results of the paper. For reference of materials in this section, see~\cite{CoxLittleOshea15}.

 Denote $[n]:=\{1,\ldots,n\}$. 
We denote by $\R[x_1,\ldots, x_n]$ (resp. $\CC[x_1,\ldots,x_n]$) 
the ring of polynomials in variables $(x_1,\ldots,x_n)$ with coefficients in $\R$ 
(resp. $\CC$)
. 
For an ideal $I$ of $\R[x_1,\ldots,x_n]$ and $p,q\in \R[x_1,\ldots,x_n]$, we say that $p=q$ in the quotient ring $\R[x_1,\ldots,x_n]/I$  if $p-q\in I$.



For {a} polynomial $p\in \R[x_1,\ldots,x_n]$, we denote by  $\deg(p)$ the degree of  $p$. 
We write $\ol{p}\in \R[x_0,\ldots,x_n]$ as  the \textit{homogenization} of $p\in \R[x_1,\ldots,x_n]$:
$$
\ol{p}\left(x_0,\ldots,x_n\right):=x_0^{\deg(p)} p\left(\frac{x_1}{x_0},\ldots, \frac{x_n}{x_0}\right).
$$
And we denote by $p^{\infty}\in \R[x_1,\ldots,x_n]$ the homogeneous polynomial obtained by letting $x_0=0$ in $\bar p$, i.e.,
$$
p^{\infty}(x_1,\ldots,x_n):=\bar p(0,x_1,\ldots,x_n).
$$
Note that $p^{\infty}$ is just the sum of all those terms of $p$ with degree equal to $\deg(p)$.  For example, if $p(x_1,x_2)=x^2_1+5x_1x_2+3x_1-x_2$, then $p^{\infty}(x_1,x_2)=x^2_1+5x_1x_2$.  

We say that $\{g_1,\ldots,g_n\}$ forms an \textit{H-basis} of the ideal $\<g_1,\ldots,g_n>$ if for any $p\in \<g_1,\ldots,g_n> $, there exists $\lambda_1,\ldots,\lambda_n\in \R[x_1,\ldots,x_n]$ such that 
$
p=\sum_{i=1}^n \lambda_i g_i
$
and 
$$
\deg(\lambda_i g_i)\leq \deg(p),\enspace \forall i=1,\ldots,n.
$$ {A similar concept is that of \textit{Gr\"obner basis}, which is defined with respect to a monomial ordering, see~\cite[Chapter 2]{CoxLittleOshea15} for a precise definition.  A Gr\"obner basis with respect to a monomial ordering that is compatible with the degree is an H-basis. The converse is not necessarily true. In~\Cref{ex:cgrob}, we provide an example of H-basis which is not a Gr\"obner basis with respect to any monomial ordering. }

 Let $\bS:=\R[x_0,\ldots,x_n]$ be the ring of polynomials of $n+1$ variables with real coefficients. Denote by $\bS_d$ the subset of $\bS$ containing all the homogeneous polynomials of 
 degree $d$. Then $\bS_d$ is an $\R$-vector space of dimension $\binom{n+d}{n}$. 
 {For  a homogeneous ideal $\bI$ of $\bS$, let
 $\bI_d$ be the  subspace of $\bS_d$ given by $\bI \cap \bS_d$.} Denote by $\bS_d/\bI_d$ the quotient  space.  For any $\R$-vector space $W$, we denote by $\dim_{\R} W$ its dimension as an $\R$-vector space.
We now state two theorems that are essential for deriving the effective degree bound.
\begin{theorem} [B\'ezout's Theorem]\label{thm:Bezout}
Under~\Cref{ass:ginfty}, we have
\begin{equation}\label{a:dimension0}\dim_{\R} \left( \RR[x_1,\ldots,x_n]/ \langle g_1,\ldots, g_n\rangle \right)=\prod_{i=1}^n \deg(g_i).\end{equation}
\end{theorem}
For reference of B\'ezout's Theorem, see~\cite[Theorem 5.5, \S 3.5]{Coxusingag05}.
Without further specification, $\bI$ denotes the ideal of $\bS$ generated by  the homogeneous polynomials $\bar g_1,\ldots,\bar g_n$.
\begin{theorem}[see~\cite{MollerHbasis}]\label{thm:dimSI}
Under~\Cref{ass:ginfty}, $\{g_1,\ldots,g_n\}$ forms an H-basis of the ideal $\<g_1,\ldots,g_n>$ and
\begin{equation}\label{a:facd}
\dim_{\RR} \bS_{\fn}/\bI_{\fn}= \prod_{i=1}^n \deg(g_i),
\end{equation}
where $\fn$ is the constant defined in~\eqref{a:fn}.
\end{theorem}
\cite{MollerHbasis} presented a sketched proof for the results stated in~\Cref{thm:dimSI}. In~\Cref{seca:thmdim}, we provide a complete proof for the sake of thoroughness.

\section{Square and sum of {two} squares representation of a nonnegative polynomial over finite variety}\label{sec:square}

In this section we present results on  sum of squares representation of nonnegative polynomial over finite variety.
We give  nonsingularity conditions under which  any nonnegative polynomial over a finite variety can be written as one single square  or as the sum of at most two squares in the   ideal. 

Denote by  $V_{\CC}(g_1,\ldots,g_k)$ the complex variety where $g_1,\ldots,g_k$  all vanish: $$V_{\CC}(g_1,\ldots,g_k):=\{z\in \CC^n: g_1(z)=\cdots=g_k(z)=0\}.$$
Equation~\eqref{a:dimension0} implies  that $\dim_{\R} \left( \RR[x_1,\ldots,x_n]/ \langle g_1,\ldots, g_k\rangle \right)<+\infty$ and thus  the complex variety $V_{\CC}(g_1,\ldots,g_k)$ is finite. 
Let $v\in V_{\CC}(g_1,\ldots,g_k)$. If the matrix 
\begin{equation}\label{a:sse}
\left[\begin{array}{ccc}
    \nabla g_1(v) & \cdots & \nabla g_k(v)  
\end{array}\right]
\end{equation}
has rank $n$ in $\CC^n$, then $v$ is said to be a \textit{nonsingular solution}.  Otherwise, $v$ is a \textit{singular} solution.  We denote by 
$$
\cS_{\CC}(g_1,\ldots,g_k):=\left\{v\in V_{\CC}(g_1,\ldots,g_k): \rank \left[\begin{array}{ccc}
    \nabla g_1(v) & \cdots & \nabla g_k(v)  
\end{array}\right] <n \right\}
$$
the set of  singular solutions, and by 
$$
\cS_{\RR}(g_1,\ldots,g_k):= \R^n \cap \cS_{\CC}(g_1,\ldots,g_k)
$$
the set of singular solutions that are real. 

We start by stating several technical lemmas. For the sake of clarity in our presentation, their proofs are deferred to the appendix.
Recall that the maximal ideals of $\CC[x_1,\ldots,x_n]$ are in one-to-one correspondence with the points in $\CC^n$. Every maximal ideal of $\RR[x_1,\ldots,x_n]$ can be written as the intersection of $\RR[x_1,\ldots,x_n]$ with a maximal ideal of $\CC[x_1,\ldots,x_n]$. Hence every point $v\in \CC^n $ is associated with a maximal ideal of $\R[x_1,\ldots,x_n]$.   Let $I$ be an ideal of $\R[x_1,\ldots,x_n]$. We denote by $I^{\ell}$ the ideal generated by all products $a_1a_2\cdots a_{\ell}$ in which each factor $a_i$ belongs to $I$.
\begin{lemma}\label{l:nonsingular}
Let $\fm$ be the maximal ideal of $\R[x_1,\ldots,x_n]$ associated with a solution $v\in V_{\CC}(g_1,\ldots,g_k)$. If $v$ is nonsingular, then 
$$
\fm=\fm^\ell+\<g_1,\ldots,g_k>,\enspace \forall \ell\geq 1.
$$
\end{lemma}
\begin{lemma}\label{l:squarerootlemma01}
Let $\fm$ be the maximal ideal of $\RR[x_1,\ldots,x_n]$ associated with a point  $v\in \CC^n$.  
 Let  $p\in \RR[x_1,\ldots,x_n]$. If 
\begin{enumerate}
\item[$(1)$]   $p(v)\neq 0$, and
\item[$(2)$]  $p(v)>0$ if $v\in \R^n$,
\end{enumerate}
then for any $\ell\geq 1$,  there is $q\in \R[x_1,\ldots,x_n]$ such that 
$p=q^2$ in $\RR[x_1,\ldots,x_n]/{\fm^\ell}$.
\end{lemma} 
\begin{lemma}\label{l:squarerootlemma-1}
Let $\fm$ be a maximal ideal of $\RR[x_1,\ldots,x_n]$ associated with a non-real point $v\in \CC^n \backslash \R^n$. For any $\ell \geq 1$, there is $q\in \R[x_1,\ldots,x_n]$ such that 
$q^2\equiv -1$ in $\RR[x_1,\ldots,x_n]/{\fm^\ell}$.
\end{lemma}
 Using the above three lemmas, we prove the main theorem on square and sum of two squares representation of a nonnegative polynomial over finite variety.   Recall that the maximal ideals of $\R[x_1,\ldots,x_n]/\langle g_1,\ldots,g_k\rangle$  are the maximal ideals of $\RR[x_1,\ldots,x_n]$  which contain the ideal $\langle g_1,\ldots,g_k\rangle$. 
\begin{theorem}\label{thm:squareroot}
Suppose that $V_{\CC}(g_1,\ldots,g_k)$ is a finite set. 
Let $p\in \R[x_1,\ldots,x_n]$  such that 
$
p(v)\geq 0$ for all $v\in V_{\CC}(g_1,\ldots,g_k)\cap \R^n.
$
\begin{enumerate}
\item[(a)] If 
 $p(v)\neq 0 $ for all $v\in \cS_{\CC}(g_1,\ldots,g_k)$, 
then there exist $q,\lambda_1,\ldots,\lambda_k\in \R[x_1,\ldots,x_n]$ such that 
$$
p=q^2+\sum_{i=1}^k \lambda_i g_i.
$$
\item[(b)] If $p(v)\neq 0$ for all  $ v\in \cS_{\R}(g_1,\ldots,g_k)$,
then there exist $q_1,q_2,\lambda_1,\ldots,\lambda_k\in \R[x_1,\ldots,x_n]$ such that 
$$
p=q_1^2+q_2^2+\sum_{i=1}^k \lambda_i g_i.
$$
\end{enumerate}
\end{theorem}
\begin{proof}
 For ease of notation, denote $A:=\R[x_1,\ldots,x_n]/\<g_1,\ldots,g_k>$. 
 Since $\dim_{\RR}(A)<+\infty$,  $A$ is a finite-dimensional $\RR$-vector space and thus  $A$ is Artitian~(\cite[Exercise 3, Chapter 8]{AtiyahMac}). It follows that $A$ only has finitely many maximal ideals~(\cite[Corollary 8.2]{AtiyahMac}).  
Let $\fm_1,\ldots,\fm_s$ be the maximal ideals of $A$. 
  By the structure theorem for Artinian rings~(\cite[Theorem 8.7]{AtiyahMac}), there is $\ell\in \NN$ such that
 \begin{equation}\label{a:artindecomp}
 A\simeq \prod_{i=1}^s A/{\fm}^\ell_i,
 \end{equation}
 with the ring isomorphism $\pi$ being the product of the canonical morphism $\pi_i:A\to A/{\fm}^\ell_i$.

   We first show (a). Assume that  $p(v)\neq 0$ for all $v\in \cS_{\CC}(g_1,\ldots,g_k)$. We shall prove that for each $i\in [s]$, $p$ has a square root $q_i$ in $A/{\fm^\ell_i}$.
   Take any $i\in [s]$, and let $v\in V_{\CC}(g_1,\ldots,g_k)$ be a solution associated with $\fm_i$.  Suppose that $v$ is a nonsingular point.  By~\Cref{l:nonsingular}, we have $A/\fm_i=A/\fm^\ell_i$. 
 Since $p(v)\geq 0$ if $v\in \R^n$, there is $q_i\in \RR[x_1,\ldots,x_n]$ such that $q_i^2=p$ in $\RR[x_1,\ldots,x_n]/\fm_i$. It follows that  $q_i^2=p$ in $A/\fm_i$ and consequently $q_i^2=p$ in $A/\fm^\ell_i$.
 If $v$ is a singular point, we have $p(v)\neq 0$, and $p(v)>0$ if $v$ is  real. Hence the two conditions in~\Cref{l:squarerootlemma01}  are satisfied. We then deduce the existence of $q_i\in \RR[x_1,\ldots,x_n]$  
 such that $q_i^2=p$ in $\RR[x_1,\ldots,x_n]/\fm^\ell_i$ and thus  $q_i^2=p$ in $A/\fm_i^\ell$. We thus proved the existence of $q_1,\ldots,q_s$ such that 
 $$
 \pi(p)=\left( \pi_1(q_1^2),\ldots, \pi_s(q^2_s)\right).
 $$
Since $\pi$ is surjective, there is $q\in A$ such that $\pi(q)=\left( \pi_1( q_1),\ldots,  \pi_s(q_s)\right)$. It follows that 
$\pi(q^2)=\left( \pi_1(q_1^2),\ldots, \pi_s(q^2_s)\right)=\pi(p)$. Since $\pi$ is injective, we have $p=q^2$ in $A$ and thus (a) is proved.

To prove (b),  we can recycle most of the arguments above  except when $v$ is a non-real singular point associated with $\fm_i$.  In this case  we know from~\Cref{l:squarerootlemma-1} that there exist $q\in \R[x_1,\ldots,x_n]$ such that 
$q^2=-1$ in $\R[x_1,\ldots,x_n]/\fm_i^\ell$. Then
$
p=(p+1/4)^2+q^2(p-1/4)^2
$ in $\R[x_1,\ldots,x_n]/\fm_i^\ell$. Thus there exists $q_{i,1}, q_{i,2}\in \R[x_1,\ldots,x_n]$ such that $q_{i,1}^2+q_{i,2}^2=p$ in $A/\fm_i^\ell$. Hence there exist $q_{1,1},\ldots,q_{s,1}, q_{1,2},\ldots,q_{s,2}$ such that 
$$
 \pi(p)=\left( \pi_1(q_{1,1}^2)+\pi_1(q_{1,2}^2),\ldots, \pi_s(q^2_{s,1})+\pi_s(q_{s,2}^2)\right).
 $$
Since $\pi$ is surjective, there is $q_1,q_2\in A$  such that $$\pi(q_1)=\left( \pi_1( q_{1,1}),\ldots,  \pi_s(q_{s,1})\right),$$ and $$\pi(q_2)=\left( \pi_1( q_{1,2}),\ldots,  \pi_s(q_{s,2})\right).$$ It follows that 
$$\pi(q_1^2+q_2^2)=\left( \pi_1(q_{1,1}^2)+\pi_1(q_{1,2}^2),\ldots, \pi_s(q^2_{s,1})+\pi_s(q_{s,2}^2)\right)=\pi(p).$$ Since $\pi$ is injective, we have $p=q_1^2+q_2^2$ in $A$.

 \end{proof}

We next apply~\Cref{thm:squareroot} for determining the membership   of $f-f_*$ in the quadratic module $\<g_1,\ldots,g_k>+\qm{h_0,\ldots,h_m}${, as well as bounding the number of squares.}
\begin{corollary}\label{coro:ffstar}Suppose that $V_{\CC}(g_1,\ldots,g_k)$ is a finite set. 
If for {every} optimal solution $v$ of~\eqref{eq:1} we have
\begin{equation}\label{a:aradf1}
\rank\left (  \left[\begin{array}{lll}\nabla g_1(v) & \cdots & \nabla g_k (v)\end{array} \right ]\right)=n, 
\end{equation}
then { there exist $\lambda_1,\ldots\lambda_k,q_1,\ldots,q_{m+2}\in \R[x_1,\ldots,x_n]$ such that
\begin{equation}\label{eq:finmodule2q}
f-f_*=q_1^2+q_2^2+\sum_{i=1}^k \lambda_i g_i+\sum_{j=1}^m q_{j+2}^2 h_j.
\end{equation}
}
\end{corollary}
\begin{proof}
Let $\{v_1,\ldots, v_s\}$ be all the points in $V_{\CC}(g_1,\ldots,g_k)\cap \R^n$. W.l.o.g we assume that there is an integer  $t\in [s]$  such that
$$
\{v_1,\ldots,v_{t}\}=\{x\in \R^n: g_1(x)=0,\ldots,g_k(x)=0, h_1(x)\geq 0,\ldots,h_m(x)\geq 0\}.
$$ 
For each $i \in [s]$ such that $i\geq t+1$, let $\phi_i\in \R[x_1,\ldots,x_n]$ be a polynomial satisfying 
$$
\phi_i(v_j)=\left\{\begin{array}{ll} 0 & \mathrm{~} j \in [s], j\neq i\\
 1 &  \mathrm{~} j=i
\end{array}\right. \enspace {,}
$$ and let  $\xi_i\in [m]$ such that $h_{\xi_i}(v_i)<0$. Consider the following polynomial
$$
p:=f-f_* -  \sum_{i=t+1}^s\frac{\mathbf{1}_{f(v_i)\leq f_*}\left( f(v_i)-f_*-1\right)}{h_{\xi_i}(v_i)} \phi^2_i h_{\xi_i}.
$$
It can be checked that
$$
p(v_i)=\left\{\begin{array}{ll}f(v_i)-f_* & \enspace i\leq t \mathrm{~or~}  f(v_i)>f_*\\
 1  & \enspace t+1\leq i\leq s \mathrm{~and~} f(v_i)\leq f_* 
\end{array}\right.\enspace {.}
$$
Then $p(v_i)\geq 0$ for all $i\in [s]$. In addition, if $p(v_i)=0$, then $v_i$ must be an optimal solution of~\eqref{eq:1} and hence $v_i\notin \cS_{\R}(g_1,\ldots,g_k)$. By~\Cref{thm:squareroot}, there exist $q_1,q_2,\lambda_1,\ldots,\lambda_k\in \R[x_1,\ldots,x_n]$ such that 
$$
p=q_1^2+q_2^2+\sum_{i=1}^k \lambda_i g_i,
$$
which can be written as 
$$
f-f_* = \sum_{i=1}^k \lambda_i g_i+ q_1^2+q_2^2 +\sum_{i=t+1}^s\frac{\mathbf{1}_{f(v_i)\leq f_*}\left( f(v_i)-f_*-1\right)}{h_{\xi_i}(v_i)} \phi^2_i h_{\xi_i}
$$
Note that $\frac{\mathbf{1}_{f(v_i)\leq f_*}\left( f(v_i)-f_*-1\right)}{h_{\xi_i}(v_i)} $ is nonnegative. We then deduced~\eqref{eq:finmodule2q}.

 \end{proof}

{
An immediate consequence of~\Cref{coro:ffstar} is the following result on the inclusion of $f-f_*$ in the quadratic module.  
\begin{corollary}\label{coro:ffstar2}Under the same conditions as~\Cref{coro:ffstar}, we have
\begin{equation}\label{f:ersdfer}
f-f_*\in \langle g_1,\ldots,g_k\rangle+\qm{h_0,\ldots,h_m}.
\end{equation}
\end{corollary}
\begin{remark}\label{rem:c923M}
\Cref{coro:ffstar2} could also be proved by applying~\cite[Theorem 9.2.3]{Marshallbook} (see also~\cite[Theorem 2.8]{SCHEIDERER2005558}), that we recall below for clarity  of discussion. 
\begin{theorem}\label{thm:Marshall}\cite{Marshallbook}
Suppose $M$ is an Archimedean quadratic module of a commutative ring $A$ containing $1/2$, $p\in A$, $p\geq 0$ on $\cK_M$ and $A/J$ is Artinean, where $J:=(M+(p^2))\cap -(M+(p)^2)$. Then the following are equivalent:
\begin{itemize}
\item[(1)] $p\in M$.
\item[(2)] For each zero $\alpha$ of $p$ in $\cK_M$, $p$ belongs to the quadratic module of $\hat A_{\alpha}$ generated by the image of M.
\end{itemize}
\end{theorem}
We refer to~\cite{Marshallbook} for the precise meaning of the notions in the above theorem. Translating in the language of this paper,  $A$ corresponds to $\R[x_1,\ldots,x_n]$, $M$ corresponds to  $\langle g_1,\ldots,g_k\rangle+\qm{h_0,\ldots,h_m}$, $\cK_M$ corresponds to the feasible region of~\eqref{eq:1}, $p$ corresponds to $f-f_*$ and the condition "\textit{$\R[x_1,\ldots,x_n]/J$ is Artinean}" is equivalent to that the complexity variety $\{z\in \CC^n: q(z)=0,\enspace \forall q\in J\}$ is a finite set, which holds in the setting of this paper because $\langle g_1,\ldots,g_k\rangle \subset J$ and $V_{\CC}(g_1,\ldots,g_k)$ is a finite set. Hence to obtain~\eqref{f:ersdfer} it remains to show that the condition (2) in the above theorem holds, which essentially boils down to the proof of~\Cref{l:nonsingular} and~\Cref{l:squarerootlemma01}.

In fact, similar to our proof of~\Cref{thm:squareroot},  the proof of~\cite[Theorem 9.2.3]{Marshallbook} also relies on the fundamental structure theorem of Artin rings. The key difference 
is that the ring isomorphism~\eqref{a:artindecomp} is applied to the quotient ring $A:=\R[x_1,\ldots,x_n]/J$ in~\cite[Theorem 9.2.3]{Marshallbook}, rather than to $A:=\R[x_1,\ldots,x_n]/\langle g_1,\ldots,g_k\rangle$ as  in our proof of~\Cref{thm:squareroot}.  Since  the radical of the ideal $J$ contains  the real radical of the ideal $\langle g_1,\ldots, g_k\rangle$~(\cite[P.23]{Marshallbook}),  $J$ is zero-dimensional if the real variety $V_{\CC}(g_1,\ldots,g_k)\cap \R^n$ is a finite set. Hence 
 our setting is less general than that considered in~\cite[Theorem 9.2.3]{Marshallbook}. Despite the loss of generality, the finite complex variety condition allows us to apply the fundamental structure theorem of Artinian rings~\eqref{a:artindecomp} directly on the quotient ring $\R[x_1,\ldots,x_n]/\langle 
 g_1,\ldots,g_k\rangle$. This results in a more straightforward proof and a stronger outcome, as stated in~\Cref{thm:squareroot}, which shows that $p$ is a sum of at most two squares in $\<g_1,\ldots,g_k>$. In contrast, the approach of~\cite[Theorem 9.2.3]{Marshallbook} only yields that $p$ is a square in $J$, and therefore can not lead to any control on the number of squares.
\end{remark}
\begin{remark}
When the ideal $\langle g_1,\ldots,g_k \rangle$ is both zero-dimensional and radical, all the solutions in $V_{\CC}(g_1,\ldots,g_k)$ are nonsingular. For this special case, we can replace~\eqref{eq:finmodule2q} by
\begin{equation}\label{eq:finmodule1q}
f-f_*=q_1^2+\sum_{i=1}^k \lambda_i g_i+\sum_{j=1}^m q_{j+1}^2 h_j.
\end{equation}
The same type of SOS representation of nonnegative polynomials over finite varieties have been constructed previously in~\cite{Parrilo02anexplicit} (see also~\cite[Theorem 11]{Laurent07}). In fact, due to the radical ideal condition, the multiplicity order $\ell$ in the isomorphism~\eqref{a:artindecomp} is equal to 1, and~\cite{Parrilo02anexplicit} actually constructed explicitly this isomorphism using the interpolation polynomials. 
\end{remark}
\begin{remark}
\cite[Theorem 10.4.1]{Marshallbook} highlights the critical importance of the closedness of the truncated quadratic module $\mQ_{2d}$ in ensuring strong duality within the moment-SOS hierarchy \eqref{eq:13}-\eqref{eq:d-mom}. In the absence of inequality constraints, if the ideal $\<g_1,\ldots,g_k>$ is real radical and $\{g_1,\ldots,g_k\}$ form an 
H-basis, \cite[Lemma 4.1.4]{Marshall_2009} shows that the truncated quadratic modules 
$\mQ_{2d}$
  are closed for each $d$. Consequently, using the same reasoning as in \cite[Theorem 10.4.1]{Marshallbook}, if $f^{sos}_{d}\in \R$, then the strong duality holds and the optimal solution of \eqref{eq:13} is achieved. In this case,  the finite convergence of the moment hierarchy $f^{mom}_d=f_*$ leads to~\eqref{f:ersdfer}, i.e. the exactness of the SOS relaxation. For a detailed discussion and generalization, including cases with inequality constraints, we refer to \cite[Proposition 3.10]{BALDI2024102403}.
\end{remark}
}

\section{Effective degree bound}\label{sec:cia}
In this section, we derive an effective degree bound under~\Cref{ass:ginfty} and provide the proof for~\Cref{thm:main}.

We first show that there is a basis of $\R[x_1,\ldots,x_n]/\<g_1,\ldots,g_n>$ such that the maximum degree of a polynomial in the basis is less than $\fn$, defined in~\eqref{a:fn}.

\begin{prop}\label{claim1} 
Under~\Cref{ass:ginfty}, for any $p\in \R[x_1,\ldots,x_n]$,  there is $q \in \R[x_1,\ldots,x_n]$ with $\deg(q)\leq \fn$ such that 
$$
 p - q \in \<g_1,\ldots,g_n>.
$$
\end{prop}
\begin{proof}
By~\Cref{thm:Bezout} and~\Cref{thm:dimSI},
\begin{equation}\label{a:posd}
\dim_{\RR} \bS_{\fn}/\bI_{\fn}=\prod_{i=1}^n \deg(g_i)=\dim_{\RR} \RR[x_1,\ldots,x_n]/\<g_1,\ldots,g_n>.
\end{equation} Define the $\R$-linear map $\phi:  \bS_{\fn} \rightarrow \R[x_1,\ldots,x_n] $ as follows: 
$$
\phi(h)(x_1,\ldots,x_n):=h(1,x_1,\ldots,x_n).
$$
This map induces an $\R$-linear map from $ \bS_{\fn}/\bI_{\fn}$ to $\R[x_1,\ldots,x_n]/\<g_1,\ldots,g_n>$. 
We next argue that this induced $\R$-linear map is injective. 
Let $h\in \bS_{\fn}$  such that $\phi(h)\in \<g_1,\ldots,g_n>$. By~\Cref{thm:dimSI},  $\{g_1,\ldots,g_n\}$ forms an H-basis of the ideal $\<g_1,\ldots,g_n>$. Hence  there are polynomials $\lambda_1,\ldots,\lambda_n\in \R[x_1,\ldots,x_n]$ such that  $\phi(h)=\sum_{i=1}^n \lambda_i g_i$  and \begin{equation}\label{eq:lgl}\deg(\lambda_i g_i)\leq \deg(\phi(h)),\enspace \forall i\in [n].\end{equation}
We have
$$
 h\left(1,\frac{x_1}{x_0},\ldots,\frac{x_n}{x_0}\right)=\sum_{i=1}^n \lambda_i\left(\frac{x_1}{x_0},\ldots, \frac{x_n}{x_0}\right) g_i\left(\frac{x_1}{x_0},\ldots, \frac{x_n}{x_0}\right).
$$
Multiplying both sides by $x_0^{\deg(h)}$ we obtain 
$$
h\left(x_0,{x_1},\ldots,{x_n}\right)=\sum_{i=1}^n x_0^{\deg(h)-\deg(g_i)-\deg(\lambda_i)}\bar \lambda_i\left(x_0,{x_1},\ldots, {x_n}\right) \bar g_i\left(x_0,{x_1},\ldots, {x_n}\right).
$$
By~\eqref{eq:lgl}, we have  $\deg(\lambda_i)\leq \deg(\phi(h))-\deg(g_i)$ whenever $\lambda_i\neq 0$. Therefore, 
$\deg(h)-\deg(g_i)-\deg(\lambda_i)\geq 0$ when $\bar \lambda_i\neq 0$. We deduce that $h\in \bI$. Thus, the $\R$-linear map  from $ \bS_{\fn}/\bI_{\fn}$ to $\R[x_1,\ldots,x_n]/\<g_1,\ldots,g_n>$ induced by $\phi$ is injective. 
  By~\eqref{a:posd}, this map is also surjective.
Therefore, for any $p\in \R[x_1,\ldots,x_n]$,  there is $h\in \bS_{\fn}$ such that
$$
 p \in  \phi(h) +\<g_1,\ldots,g_n>.
$$
Finally note that $\deg(\phi(h))\leq \deg(h)=  \fn$.

\end{proof}

\begin{theorem}\label{thm:debound}
Under~\Cref{ass:ginfty}, if 
$$
p\in \<g_1,\ldots,g_k>+ \qm{h_0,\ldots,h_m},
$$
then 
$$
p\in \I{g_1,\ldots,g_k}_{2d}+ \qm{h_0,\ldots,h_m}_{2d},
$$
for some \begin{equation}\label{a:dles}d\leq \lceil\max\left(\deg(p)/2, (\fn+d_g)/2, \fn+d_h/2\right)\rceil.\end{equation}
\end{theorem}
\begin{proof}
Let $\lambda_1,\ldots,\lambda_k\in \R[x_1,\ldots,x_n]$ and $\sigma_0,\ldots,\sigma_m\in \Sigma [x_1,\ldots,x_n]$ such that 
$$
p=\sum_{i=1}^k \lambda_i g_i+\sum_{j=0}^m \sigma_j h_j.
$$
By~\Cref{claim1}, we can assume w.l.o.g. that $\max \left(\deg(\lambda_{n+1}),\ldots,\deg(\lambda_k)\right)\leq \fn$ and 
$\max \left(\deg(\sigma_{0}),\ldots,\deg(\sigma_m)\right)\leq 2\fn$. Then the function 
$$
\phi:=
p-\sum_{i=n+1}^{k}\lambda_i g_i-\sum_{j=0}^m \sigma_j h_j $$ is a polynomial of degree bounded by
$$
\max\left(\deg(p), \fn+d_g, 2\fn+d_h\right).
$$
Also, $\phi\in \<g_1,\ldots,g_n>$. By the H-basis property, there exist $\lambda_1,\ldots,\lambda_n\in \R[x_1,\ldots,x_n]$ such that $\deg(\lambda_i g_i) \leq {\deg(\phi)}$ and 
$$
\phi=\sum_{i=1}^n \lambda_i g_i.
$$
 \end{proof}
Our main result \Cref{thm:main} follows as an immediate consequence of~\Cref{coro:ffstar} and~\Cref{thm:debound}{, because the process outlined in the proof of~\Cref{thm:debound} does not change the number of squares in each $\sigma_i$, for $i=0,1,\ldots,m$. }
\begin{remark}
 The \textit{resultant} of $p_1,\ldots,p_n$, denoted by $\Res(p_1,\ldots,p_n)$, is a polynomial in the coefficients of $p_1,\ldots,p_n$ satisfying 
$$
\Res(p_1,\ldots,p_n)=0 \mathrm{~if~and~only~if~} \exists z\in \CC^n\backslash\{{\bf{0}}\} \mathrm{~s.t.~} p_1(z)=\cdots=p_n(z)=0.
$$
\Cref{ass:ginfty} can be stated equivalently as 
$$
\Res(g^{\infty}_1,\ldots,g^{\infty}_n)\neq 0.
$$
\end{remark}
By Putinar's Positivstellensatz, when $V_{\CC}(g_1,\ldots,g_k)$ is finite, for any $\epsilon>0$ we have
$$
f-f_*+\epsilon \in \<g_1,\ldots,g_k>+\qm{h_0,\ldots,h_m}.
$$
In view of~\Cref{thm:debound}, under~\Cref{ass:ginfty}, for any $\epsilon>0$ we have
$$
f-f_*+\epsilon \in \I{g_1,\ldots,g_k}_{2d}+\qm{h_0,\ldots,h_m}_{2d}
$$
where $d$ satisfies~\eqref{a:degd}.
Hence if only~\eqref{a:wddf} is concerned, we can drop the rank condition~\eqref{a:aradf}. 
\begin{corollary}\label{cor:main}
Under~\Cref{ass:ginfty}, we have ${f_{d}^{sos}=f_{d}^{mom}=f_*}$ for some 
\begin{equation}\label{a:dles2}d\leq \lceil\max\left(\deg(f)/2, (\fn+d_g)/2, \fn+d_h/2\right)\rceil.\end{equation}
\end{corollary}

\section{{Effective degree bound in the dual approach of moments}}\label{sec:ddual}

In this section, we present analogue degree bound results from the dual perspective, focusing on the bound on the order $d$ at which  an optimal solution of~\eqref{eq:1} can be recovered from an optimal solution of the $d$th order moment relaxation~\eqref{eq:d-mom} . 

For $\Lambda \in (\R[x_1,\ldots,x_n]_{2d})^{*}$ and $t\leq d$,
 $M_t(\Lambda)$  denotes the symmetric bilinear map: 
$$
M_t(\Lambda)(p,q)=\<\Lambda,pq>,\enspace \forall p,q \in \R[x_1,\ldots,x_n]_t.
$$
For $v\in \R^n$, denote by $[v]_d\in (\R[x_1,\ldots,x_n]_{d})^*$ the evaluation map: $\<[v]_d,p>=p(v)$. In the following,  $\cF$ denotes the feasible region of~\eqref{eq:1}, and $\cI_{\R}(\cF):=\{p\in \R[x_1,\ldots,x_n]: p(v)=0,\enspace \forall v\in \cF\}$ denotes the real vanishing ideal of $\cF$. For any $d\geq 1$, denote the truncated ideal $\cI_{\R}(\cF)_{d}:=\cI_{\R}(\cF)\cap \R[x_1,\ldots,x_n]_d$.  The \textit{interpolation degree} of $\cF$ is the minimal degree of a family of interpolation polynomials of $\cF$. Recall the definitions of $\mQ_d$ and $\cL_d$ in~\eqref{eq:Qd} and~\eqref{eq:cL}.   Denote by $\bar \mQ_d$ the closure of $\mQ_d$ with respect to the Euclidean topology in the finite dimensional vector space $\R[x_1,\ldots,x_n]_d$. Let 
$\cK_{d}:=\{\Lambda \in \cL_{d}: \<\Lambda,h_0>=1\}$.  We say that
$\Lambda$ is a generic element of $\cK_{2d}$ if $\rank(M_d(\Lambda))=\max \{\rank(M_d(\Lambda')):\Lambda'\in \cK_{2d}\}$.
\begin{definition}
For $\Lambda \in \cK_{2d}$ and $t<d$, the \textit{flat truncation condition holds at degree $t$} if 
\begin{equation}\label{eq:rankM}
\rank(M_t (\Lambda) )=\rank(M_{t+s} (\Lambda)).
\end{equation}
for some integer $0<s\leq d-t$ satisfying
\begin{equation}\label{eq:scondition}
\left\{
\begin{aligned}
&2s\geq \max_{i\in [m]} \deg(h_i), \\
&t+2s\geq  \max_{i\in [k]} \deg(g_i).
\end{aligned}
\right.
\end{equation}
\end{definition}
The flat truncation condition~(\cite{LasserreLaurentRostalski08,Nie2013}) extends the \textit{flat extension condition} proposed by \cite{CurtoFialkow96}. This condition is significant both theoretically and practically in certifying convergence and extracting optimal solutions due to the following result.
\begin{theorem}[see~\cite{Laurent04}]\label{thm:L04}
Let $\Lambda\in \cK_{2d}$. If the rank condition~\eqref{eq:rankM} holds for some $t<d$ and $0<s\leq d-t$ satisfying~\eqref{eq:scondition}, then $\Lambda_{2(t+s)}$ belongs to the convex hull of the set $\{[v]_{2(t+s)}: v\in \cF\}$.
\end{theorem}
The requirement in~\eqref{eq:scondition} differs slightly from what appears in the existing literature. Previous works either focus solely on inequality constraints (\cite{Laurent04,Nie2013,BALDI2024102403}) or treat equality constraints as two inequality constraints~(\cite{LasserreLaurentRostalski08,Laurent2012}), resulting in the condition  $2s \geq  \max \left(\max_{i\in [m]} \deg(h_i), \max_{i\in [k]} \deg(g_i)\right)$.
We have made a slight adjustment to address equality constraints separately. For completeness, a brief proof outline of \Cref{thm:L04} is provided in~\Cref{sec:proofL04} of the appendix.

\cite[Theorem 3.19]{BALDI2024102403} establishes a close connection between the order at which the flat truncation condition holds and the order of finite convergence. We present this important result below, slightly modified to align with our notations. (Recall the definition of $\dim_{\R}$ in~\Cref{sec:pre}.)
 
\begin{theorem}[\cite{BALDI2024102403}]\label{thm:BALDI}
Suppose that $\dim_{\R}{\R[x_1,\ldots,x_n]}/\left(\mQ \cap (-\mQ)\right)<+\infty$, where $\mQ:=\langle g_1,\ldots,g_k\rangle +\qm{h_0,\ldots,h_m}$.  Then $\cF$ is finite and for any $s\geq 1$, there exists $d\geq \theta+s$ such that $\cI_{\R}(\cF)_{2(\theta+s)}\subset \overline \mQ_{2d}$, where $\theta$ is the interpolation degree of $\cF$, and for any $\Lambda \in \cK_{2d}$ we have
\begin{equation}\label{eq:Mthetas}
\rank(M_\theta (\Lambda)) =\rank(M_{\theta+s} (\Lambda)).
\end{equation}
\end{theorem}
\begin{remark}
In the original formulation of~\cite[Theorem 3.19]{BALDI2024102403}, 
there are no equality constraints and
the integer $s$ is specified as $\lceil\max_{i\in [m]} \deg(h_i)/2\rceil$. This  particular choice of $s$ is connected to the first condition in requirement ~\eqref{eq:scondition}. Their method can be naturally extended to accommodate equality constraints, and the proof provided by \cite[Theorem 3.19]{BALDI2024102403} is valid for any integer $s\geq 1$. 
\end{remark}
\begin{theorem}\label{thm:flatorder}
Under~\Cref{ass:ginfty}, for every
\begin{equation}\label{eq:dlb}
d\geq  \left\lceil\max\left((\fn+d_g)/2, \fn+d_h/2, \fn+1\right)\right\rceil,
\end{equation}
and every $\Lambda \in \cK_{2d}$,
the flat truncation condition holds at some degree $\theta \leq \fn$. 
\end{theorem}
\begin{proof} 
Let any $d$ satisfy~\eqref{eq:dlb}. Let  any $p\in \cI_{\R}(\cF)_{2d}$. In view of~\Cref{thm:squareroot} (or simply by Putinar’s Positivstellensatz), 
for any $\epsilon>0$, $p+\epsilon \in \mQ$. Since $d$ satisfies~\eqref{eq:dlb},  for any $\epsilon>0$, we have
 $p+\epsilon \in \mQ_{2d}$ for all $\epsilon>0$ by~\Cref{thm:debound}. Consequently,  $p\in \overline{\mQ_{2d}}$.
We thus proved that $\cI_{\R}(\cF)_{2d}\subset   \overline{\mQ_{2d}} $. 
Let $\theta$ be the interpolation degree of $\cF$. Note that $\theta\leq \fn <d$ by~\Cref{claim1} and~\eqref{eq:dlb}.
Applying directly~\Cref{thm:BALDI}, we obtain that for each $\Lambda \in \cK_{2d}$ and $1\leq s\leq d-\theta$, the rank condition~\eqref{eq:Mthetas} holds.
Next we check that $s= d-\theta$ and $t=\theta$ satisfies the condition~\eqref{eq:scondition}, which reduces to show that
$$
\enspace 2(d-\theta)\geq    \max_{i\in [m]}\deg(h_i), \enspace 2d-\theta\geq   \max_{i\in [k]}\deg(g_i).
$$
Since $\theta\leq \fn$, it suffices to verify (recall the definition of $d_g$ and $d_h$ in~\eqref{a:dg0} and~\eqref{a:dh})
\begin{equation}\label{eq:wde}
2(d-\fn) \geq   d_h ,\enspace 2d \geq  \fn+ d_g, \enspace 2d\geq \fn+ \max_{i\in [n]}\deg(g_i).
\end{equation}
It is easy to check that $\max_{i\in [n]} \deg(g_i) \leq \fn+1$. Hence~\eqref{eq:wde} holds for all $d$ satisfying~\eqref{eq:dlb}.
\end{proof}
\Cref{thm:flatorder} provides an effective degree bound for the order at which the flat truncation condition holds under \Cref{ass:ginfty}. Together with~\Cref{thm:L04}, we obtain an effective degree bound on the relaxation order $d$ at which an optimal solution of~\eqref{eq:1} can be recovered from an optimal solution of~\eqref{eq:d-mom}.
\begin{remark} 
The interpolation degree of a finite set is strongly related with the uniqueness degree in the reconstruction problem, see~\cite{DECASTRO2012336,GARCIA2021251}.
Let $K\subset \R^n$ be a compact set, and denote by $\mathcal{S}(K)$ the set of all signed Borel measures on $K$. Let $X\subset K$ be a finite set.
The uniqueness degree of $X$, denoted as $d(X)$, is defined to be the smallest integer $d$ such that for every measure $\mu$ supported on $X$, the  following \textit{superresolution} problem:
\begin{equation}\label{eq:supperr}
\begin{array}{ll}
\min_{\nu \in \mathcal{S}(K)} &\|\nu\|_{TV} 
\\
\mathrm{s.t.} \enspace & \<\nu, x^{\alpha} >=\<\mu,x^{\alpha}>,\enspace \forall \alpha\in \NN^n, \alpha_1+\cdots+\alpha_n\leq d
\end{array}
\end{equation}
has a unique solution. 
Here $\|\nu\|_{TV}$ denotes the total variation norm of $\nu$.
In \cite[Theorem 1.2]{GARCIA2021251}, it was demonstrated that $d(X)\leq \max\left(2g(X),i(X)\right)$, where $g(X)$  represents the maximum degree of a minimal generator of $\cI_{\R}(X)$ and $i(X)$ is the interpolation degree of $X$. Our approach for proving~\Cref{claim1}, as detailed in \Cref{thm:dimSI} and \Cref{seca:thmdim}, shares a strong similarity with the method used in \cite{GARCIA2021251}. Both approaches involve embedding into projective space to facilitate arguments with graded rings and to estimate degree bounds using the Hilbert function. However, a crucial difference lies in the nature of the embedding: we embed $V_{\CC}(g_1,\ldots,g_n)\subset \CC^n$ into the complex projective space, whereas \cite{GARCIA2021251} embeds the set $X\subset \R^n$  into the real projective space. This subtle yet significant distinction, which sets our approach apart from existing ones, echoes the discussion we previously had in \Cref{rem:c923M}.  
\end{remark}
To conclude this section, we also present an effective version of the results from \cite{LasserreLaurentRostalski08} and \cite{Laurent2012}.  
\begin{theorem}\label{thm:realradical}
Under~\Cref{ass:ginfty},  there exists some \begin{equation}\label{b:rankstablise}
t\leq  \lceil\max\left( (\fn+d_g)/2, \fn+d_h/2,\fn+1\right) \rceil,
\end{equation} such that
\begin{enumerate}
\item [(i)]
The set $\cK_{2t}$
is equal to the convex hull of the set $\{[v]_{2t}:v\in \cF\}$,
\item [(ii)] Let $\Lambda$ be a generic element of $\cK_{2t}$. Then 
$
\ker M_{t}(\Lambda)
$
generates the real radical ideal $\cI_{\R}({\cF})$.
\end{enumerate}
\end{theorem}
\begin{proof}
Let $
d=\lceil\max\left( (\fn+d_g)/2, \fn+d_h/2,\fn+1\right) \rceil.
$
From~\Cref{thm:flatorder},  for every $\Lambda \in \cK_{2d}$, we have
$\rank(M_{s}(\Lambda))=\rank(M_{t+s}(\Lambda))$, with $s=\theta$ and $t=d-\theta$ satisfying~\eqref{eq:scondition}. It follows from~\Cref{thm:L04} that $\Lambda_{2d}$ belongs to the convex hull of $\{[v]_{2d}:v\in \cF\}$.  Therefore, the set $\cK_{2d}$ is contained in the convex hull of the set $\{[v]_{2d}: v\in \cF\}$.  Thus (i) is proved.  To show (ii), let $\Lambda$ be a generic element of $\cK_{2d}$. Then $\rank(M_d(\Lambda))=|\cF|$ because the interpolation degree of $\cF$ is less than $d$. It remains to apply~\cite[Proposition 3.6]{LasserreLaurentRostalski08}. 
\end{proof}

\section{Application to SOS over gradient ideal}\label{sec:gsos}
In this section, we apply our main results to the case of SOS relaxation over the gradient ideal as considered in~\cite{NieDemmelSturmfels06}.

Let $F\in \R[x_1,\ldots,x_n]$. Suppose that the optimal value $F_{*}$ of the unconstrained optimization problem
\begin{equation}\label{a:minf}
\inf_{x\in \R^n} F(x)
\end{equation}
is finite and achieved. Then the search of optimal solution of~\eqref{a:minf} can be restricted on the  set of critical points, i.e. where the gradient of $F$ vanishes. Instead of solving directly~\eqref{a:minf},~\cite{NieDemmelSturmfels06} proposed to consider the problem constrained on the critical set:
\begin{equation}\label{eq:gradient}
 \begin{array}{lll}
   &\displaystyle\min_{x\in \R^n}~ & F(x) \\
 ~~~~~~~~&\mathrm{s.t.} &  \displaystyle \frac{\partial F}{\partial x_1}(x)=\cdots= \frac{\partial F}{\partial x_n}(x)=0.\\
 \end{array}
\end{equation}
When applying the SOS relaxation~\eqref{eq:13} to   problem~\eqref{eq:gradient}, we obtain the following  program:
\begin{equation}\label{eq:gradientsosd}
 \begin{array}{lll}
 \displaystyle F_d:=&\displaystyle\sup_{c\in \R} & c  \\
 ~~~~~~~~&\mathrm{s.t.} &  F-c \in \I{\frac{\partial F}{\partial x_1}, \ldots, \frac{\partial F}{\partial x_n}}_{2d}+ {\Sigma_{2d}} 
 .
 \end{array}
\end{equation}
{
Here, $\Sigma_{2d}:=\Sigma[x_1,\ldots,x_n]\cap \R[x_1,\ldots,x_n]_{2d}$ is the set of SOS polynomials of degree at most $2d$.}
This corresponds to the gradient type SOS relaxation for unconstrained problem~\eqref{a:minf} proposed in~\cite{NieDemmelSturmfels06}.  
 The set of critical points is  finite if 
 there does not exist $z\in \CC^n \backslash \{{\bf{0}}\}$ such that
 \begin{equation}\label{eq:Finfty}
 \frac{\partial F^{\infty}}{\partial x_1}(z)=\cdots= \frac{\partial F^{\infty}}{\partial x_n}(z)=0,
 \end{equation}
where $F^{\infty}$ is $F$'s homogeneous part of the highest degree (see~\Cref{sec:pre} for more precise definition).

As a direct corollary of~\Cref{thm:main}, we obtain the following degree bound for the exactness of~\eqref{eq:gradientsosd}.
\begin{corollary}\label{coro:gsos}
Let $F\in\R[x_1,\ldots,x_n]$ be a polynomial function such that 
\begin{equation}\label{a:Fstarfi}F_*:=\inf_{x\in \R^n} F(x)>-\infty\end{equation} and
\begin{equation}\label{a:resFinf}
\Res\left( \frac{\partial F^{\infty}}{\partial x_1}, \ldots, \frac{\partial F^{\infty}}{\partial x_n}\right)\neq 0,
\end{equation}
then 
$F_d=F_*$ for some
\begin{equation}\label{a:dboundsosg}
d\leq  \lceil \max\left(\deg(F)/2, n(\deg(F)-2) \right)\rceil. 
\end{equation}
If, in addition, $F''(x^*)$ is positive definite  at {all} global minimum point $x^*$, then 
\begin{equation}\label{a:Fsos}
F-F_*  \in \I{\frac{\partial F}{\partial x_1}, \ldots, \frac{\partial F}{\partial x_n}}_{2d}+ {\Sigma_{2d}} 
\end{equation}
for some $d$ bounded as in~\eqref{a:dboundsosg}.
\end{corollary}
\begin{proof}
By~\cite[Theorem 2.5(e)]{Nie13Jacobian}, the conditions~\eqref{a:Fstarfi} and~\eqref{a:resFinf} guarantee that the minimum of $F$ is achievable. Hence, the minimum value of~\eqref{eq:gradient} is also equal to $F_*$.  Next we  apply~\Cref{thm:main} and~\Cref{cor:main} to the optimization problem~\eqref{eq:gradient}. For this, let:
$$
g_1:=\frac{\partial F}{\partial x_1},\cdots,g_n:=\frac{\partial F}{\partial x_n}.
$$
Condition~\eqref{a:resFinf} implies that $\frac{\partial F^{\infty}}{\partial x_i}\neq 0$   for each $i\in [n]$. Consequently, 
$g^{\infty}_i=\frac{\partial F^{\infty}}{\partial x_i}$ for each $i\in [n]$ and thus~\Cref{ass:ginfty} is satisfied for~\eqref{eq:gradient}. Moreover, $F''(x^*)$ is positive definite whenever $x^*$ is a global minimum point implies that 
 the rank of 
 $
 F''(x^*)$ is $n$ for any global minimum point $x^*$. Since
 $F''(x^*)=\left[\begin{array}{lll}\nabla g_1(x^*) & \cdots & \nabla g_n (x^*)\end{array} \right ]
 $, the rank condition~\eqref{a:aradf} is satisfied when $F''(x^*)$ is positive definite. Finally note that 
 $$
 \sum_{i=1}^n \deg(g_i)-n=\sum_{i=1}^n (\deg(F)-1)-n=n\left(\deg(F)-2\right).
 $$
\end{proof}
\begin{remark}
For ease of comparison, we recall two known theorems closely related with~\Cref{coro:gsos}.
\begin{theorem}[\cite{Nie13Jacobian}]
Let $F\in \R[x_1,\ldots,x_n]$ be a polynomial function such that~\eqref{a:Fstarfi} and~\eqref{a:resFinf} hold. 
Then the minimum of $F$ is achievable and 
$F_d=F_*$ if $d$ is sufficiently large.
\end{theorem}
\begin{theorem}[\cite{Marshall_2009}]
Given positive integers $n$ and $\delta$, there exists a positive integer $\ell$ such that, for any $F\in \R[x_1,\ldots,x_n]$ of degree $\leq \delta$, if $F$ achieves a minimum value $F_*$ on $\R^n$, and the matrix $F''(x^*)$ is positive definite for each minimum point $x^*$ of $F$ on $\R^n$,   then~\eqref{a:Fsos} holds 
for some $d\leq \ell$. 
\end{theorem}
The main difference between~\Cref{coro:gsos} and the two above theorems is that an explicit expression on the degree bound is now available under conditions~\eqref{a:Fstarfi} and~\eqref{a:resFinf}. As noted in~\cite{Nie13Jacobian},~\eqref{a:resFinf} is a condition which holds generically. In particular, a sufficiently small random perturbation of the function $F$ will make~\eqref{a:resFinf} hold with probability one. The degree bound~\eqref{a:dboundsosg} is expected to be suboptimal as we merely applied~\Cref{thm:main} without taking into account of the special relation between the objective function and the constraints in the optimization problem~\eqref{eq:gradient}.  
\end{remark}

{
\begin{remark}In a more recent work,~\cite{Magron24Gradient} presents an SOS representation with rational coefficients for nonnegative polynomial over its gradient ideal, under the assumption that the gradient ideal $\langle \frac{\partial F}{\partial x_1}, \ldots, \frac{\partial F}{\partial x_n} \rangle$ is zero-dimensional and radical.  
This SOS representation with rational coefficients relies on the explicit construction of a  Gr\"obner basis of special form of the gradient ideal $\langle \frac{\partial F}{\partial x_1}, \ldots, \frac{\partial F}{\partial x_n}\rangle$. 
The  degree of the sum of squares term in their construction is bounded above by $\deg(F) \delta $, where $\delta$ is the number of points in the complex gradient variety $\{z\in \CC^n: \nabla F(z)=0\}$. Note that under the radical ideal assumption and the same conditions as in~\Cref{coro:gsos},  B\'ezout's theorem tells us  that $\delta=(\deg(F)-1)^n$.
\end{remark}
}

\section{Discussions on the assumption}\label{sec:cwp}

In this section we make some discussions on~\Cref{ass:ginfty}, which is critical for establishing the effective degree bound. We shall restrict our discussion to the case when  there are exactly $n$ equality constraints and no inequality constraints so that the degree bound~\eqref{a:degd}  reduces to:
\begin{equation}\label{a:deg22} \left\lceil \max\left( \deg(f)/2, \sum_{i=1}^n \deg(g_i)-n\right)\right\rceil. \end{equation}
\subsection{Examples}
We provide three explicit examples satisfying~\Cref{ass:ginfty} and discuss the relation with other assumptions in the literature.  
\begin{example}\label{ex-d=2}
Let $a, b \in \R$ such that $a\neq 0$, $b\neq 0$ and $a+b\neq 1$.
Let $n=2$ and consider
\begin{equation}\label{a:g1g2}
g_1(x_1,x_2)=x_2(a(x_2-1)-x_1(b-1)),\enspace g_2(x_1,x_2)=x_1(x_2(a-1)-b(x_1-1)).
\end{equation}
It can be checked easily that $(g_1,g_2)$ satisfies~\Cref{ass:ginfty}. The common zero locus of~\eqref{a:g1g2} are 
$$
V_{\CC}(g_1,g_2)=\left\{ (0;0), (1;0), (0;1), (a;b)\right\}.
$$
When $(a,b)=(1,1)$, we recover the binary problem of dimension $n=2$. When $(a,b)\neq (1,1)$, we obtain examples satisfying~\Cref{ass:ginfty} which can not be made equivalent to the binary example through affine transformation. Moreover, it is easy to see that
\begin{align*}
g_1(x)=x_2(a(x_2-1)-x_1(b-1)),\enspace g_2(x)=x_1(x_2(a-1)-b(x_1-1)), \\
g_3(x)=x_4(c(x_4-1)-x_3(d-1)),\enspace g_4(x)=x_3(x_4(c-1)-d(x_3-1))
\end{align*}
 is a 4-dimensional example which satisfies~\Cref{ass:ginfty} if $a\neq 0$, $b\neq 0$, $c\neq 0$, $d\neq 0$, $a+b\neq 1$ and $c+d\neq 1$.  Again, the binary  problem only corresponds to the isolated case when $a=b=c=d=1$. Higher dimensional examples which satisfies~\Cref{ass:ginfty} but are not equivalent to the binary problem by affine transformation can be constructed in a similar way.
\end{example}
Note  that the ideal  $\<g_1,g_2>$ in the above examples is  radical. Below is a 2-dimensional example satisfying~\Cref{ass:ginfty} which generates non radical ideal.
\begin{example}\label{ex:notradical}
Let $a, b \in \R$ such that  $a\neq 0$ and $a+b\neq 1$. 
Let $n=2$ and consider
\begin{equation}\label{a:g1g23}
g_1(x_1,x_2)=x_2(a(x_2-1)-x_1(b-1)),\enspace g_2(x_1,x_2)=x_1(x_1+x_2-1).
\end{equation}
Then~\Cref{ass:ginfty} is satisfied. The common zero locus of~\eqref{a:g1g23} are:
$$
V_{\CC}(g_1,g_2)=\left\{ (0;0), (1;0), (0;1) \right\}.
$$
The ideal $\<g_1,g_2>$ is not  radical. 
\end{example}

We know from~\Cref{thm:dimSI} that~\Cref{ass:ginfty} implies the H-basis property. 
If $\{g_1,\ldots,g_n\}$ is a Gr\"{o}bner basis of the ideal $\<g_1,\ldots,g_n>$ { with respect to an ordering that respects the degree}, then $\{g_1,\ldots, g_n\}$ also forms an H-basis. However,~\Cref{ass:ginfty} is fundamentally different from the Gr\"{o}bner basis assumption.
Below is a two-dimensional example  which satisfies~\Cref{ass:ginfty} without being a Gr\"{o}bner basis.
\begin{example}\label{ex:cgrob}
Consider
\begin{align*}
g_1(x_1,x_2)=(x_1-1)^3+(x_2-1)^3,\enspace g_2(x_1,x_2)=(x_1-1)^4 (x_2-1)^4.
\end{align*}
\Cref{ass:ginfty} is  satisfied. 
We have
$$
\psi(x_1,x_2)=((x_2-1)^7-(x_1-1)^3 (x_2-1)^4) g_1(x_1,x_2)+ (x_1-1)^2 g_2(x_1,x_2)=(x_2-1)^{10},
$$
and 
$$
\phi(x_1,x_2)=((x_1-1)^7-(x_1-1)^4 (x_2-1)^3) g_1(x_1,x_2)+(x_2-1)^2 g_2(x_1,x_2)=(x_1-1)^{10}.
$$
Let $\LT(p)$ denote the leading term of $p$ with respect to a certain monomial ordering. 
Then,  $\LT(\psi)=x_2^{10}$, $\LT(\phi)=x_1^{10}$ and it is impossible that both of them are contained in the monomial ideal generated by $\LT(g_1)$ and $\LT(g_2)$. In other words, $\{g_1, g_2\}$ is not a Gr\"{o}bner basis  with respect to any monomial ordering. 

More generally, if $g_1,\ldots,g_n$ satisfies~\Cref{ass:ginfty}  and also forms a Gr\"{o}bner basis, it is necessary that for each $i\in [n]$, there is some $g_j$ such that $\LT(g_j)$ is a nonnegative power of $x_i$, by~\cite[Theorem 6, \S 5.3]{CoxLittleOshea15}.    In other words, if $\{g_1,\ldots,g_n\}$ satisfies~\Cref{ass:ginfty} and there is some $i\in [n]$ such that for all $j\in [n]$, $\LT(g_j)$ is not a nonnegative power of $x_i$, then $\{g_1,\ldots,g_n\}$ can not be a Gr\"{o}bner basis. 
\end{example}

\subsection{Genericity of the assumption}\label{a:gofa}
\Cref{ass:ginfty} is satisfied \textit{generically} in the following sense. Let $d_1,\ldots,d_n$ be fixed integers. Consider the vector space $\R[x_1,\ldots,x_n]_{\leq d_1}\times \cdots \times \R[x_1,\ldots,x_n]_{\leq d_n}$ of 
 $n$ real polynomial functions of degrees bounded by $d_1,\ldots,d_n$.
\Cref{ass:ginfty} is satisfied if and only
$$
\Res\left(g^{\infty}_1,\ldots,g^{\infty}_n\right)\neq 0,
$$
which corresponds to the complement of a hypersurface in $\R[x_1,\ldots,x_n]_{\leq d_1}\times \cdots \times \R[x_1,\ldots,x_n]_{\leq d_n} $. In particular, the coefficients defining $g_1,\ldots,g_n$ satisfying~\Cref{ass:ginfty} forms a Zariski open set, in the space of coefficients of $n$ real polynomials of fixed degrees.

Note that the optimization problem~\eqref{eq:1} seeks to minimize the function $f$ on a finite subset $W\subset \R^n$. Given the finite subset  $W\subset \R^n$, 
the choice of a set of $n$ polynomials $\{g_1,\ldots,g_n\}$  that defines the solution set $W$ should be viewed as an auxiliary data associated to the original problem~\eqref{eq:1}, though the sum of squares relaxation will depend on this choice. Given an arbitrary finite subset $W\subset \RR^n$, it is  possible to find $\{g_1,\ldots,g_n\}\subset \RR[x_1,\ldots,x_n]$ such that  $W=\{x\in \RR^n: g_1(x)=\cdots=g_n(x)=0\}$ and~\Cref{ass:ginfty} holds, see~\cite{Eisenbud1973}. 
With such a choice of defining polynomials,  an effective degree bound for the tightness of Lasserre's SOS relaxation with respect to arbitrary polynomial objective function $f$ is given by~\eqref{a:degd}. 

Given an arbitrary finite set $W\subset \R^n$, 
let $\{g_1,\ldots,g_n\}$ be polynomials such that $W=\{x\in \RR^n: g_1(x)=\cdots=g_n(x)=0\}$ and~\Cref{ass:ginfty} holds. It often occurs that some of the points in $W$ are singular. 
    We have already seen such an example in~\Cref{ex:notradical}. More generally, consider  the example when $W\subset \R^n$ is a subset of $|W|=2^n-1$ points. Let $\{g_1,\ldots,g_n\}\in \R[x_1,\ldots,x_n]$ be quadratic functions such that $W=\{x\in \R^n: g_1(x)=\cdots=g_n(x)=0\}$ and~\Cref{ass:ginfty} holds. 
By B\'ezout's theorem, at least one point in $W$ has multiplicity, and so $\<g_1,\ldots,g_n>$ can not be radical ideal.  Recall that
  singularity is usually an undesirable property in optimization both from theoretical and practical aspects. Interestingly,  appropriately adding  singularity on points in $W$ greatly simplifies the effective degree bound problem.

 \subsection{Comparison with grid case}\label{subsec:cwp}
 Consider the grid case when  the polynomials $g_1,\ldots,g_n$ are given by:
  \begin{equation}\label{a:grid}
  g_i(x)=\prod_{j=1}^{d_i}(x_i-a^i_j),\enspace \forall i =1,\ldots,n.
  \end{equation}  
  This grid case satisfies the technical assumption (\Cref{ass:ginfty}) and the degree bound~\eqref{a:deg22} applies.
   This degree bound was previously obtained in~\cite{Laurent07} for the grid case defined in~\eqref{a:grid}.
  
 Fix a positive integer $r$. Let us study the difference between  the class of problems covered by grid case and the class covered by~\Cref{ass:ginfty}. Note that the degree bound for the exactness of Lasserre's SOS relaxation is determined by the $\R$-vector space spanned by $\{g_1,\ldots,g_n\}$. Therefore, if suffices to consider the space of $n$-dimensional linear subspace of the vector space $\R[x_1,\ldots,x_n]_{\leq r}$, which is the Grassmannian $\mathbf{Gr}(n,\binom{n+r}{r} )$.
 Furthermore, two subspaces $\mathrm{span}\{g_1,\ldots,g_n\}$ and $\mathrm{span}\{\tilde g_1,\ldots,\tilde g_n\}$ are said to be equivalent if there is an invertible affine transformation $L:\R^n\rightarrow \R^n$ such that
  $$
  \tilde g_i(x)=g_i(L(x)),\enspace \forall i\in [n],\enspace x\in \R^n. 
  $$
  Denote by $\sim$ this equivalence relation. 
  The dimension of $\mathbf{Gr}(n,\binom{n+r}{r} )/\sim$ is at least 
  \begin{equation}\label{a:assesdfe}
\left(  \binom{n+r}{r}-n\right) n-(n^2+n),
  \end{equation}
  where the first component is the dimension of the Grassmannian and the second component is the dimension of invertible affine transformation over $\R^n$.  Since~\Cref{ass:ginfty} is satisfied on an open set, the dimension of the equivalent class of problems satisfying~\Cref{ass:ginfty} is the same as~\eqref{a:assesdfe}.
  For the grid case defined in~\eqref{a:grid},  it is always possible to let $a^i_1=1$ and $a^i_2=-1$ for each $i\in [n]$ by invertible affine transformation. Hence the dimension of the equivalent classes represented by the grid case~\eqref{a:grid} is at most
  \begin{equation}\label{a:wdgridcdim}
  (r-2)n. 
  \end{equation}
  When $n=2$ and $r=2$, the dimension formula~\eqref{a:assesdfe}  returns 2 and the dimension formula~\eqref{a:wdgridcdim} return 0. This is compatible with what we found previously in~\Cref{ex-d=2}. When $r$ is fixed and $n$ increases, the dimension of problems satisfying~\Cref{ass:ginfty} grows as $O\left(n^r\right)$, while the dimension of problems corresponding to the grid case grows as $O(rn)$, from which we see the magnitude of difference between these two classes.

 Finally, recall that more refined degree bound was obtained in~\cite{Fawzi,SakaueTakeda17} for the binary optimization problem. However, their approach relies on the finite abelian group structure of the underlying feasible set, while the degree bound in this paper is established under a condition which is satisfied generically.

\subsection{Correlation of points}
The effective degree bound established in~\eqref{a:degd} is essentially determined by the number $\fn$ defined in~\eqref{a:fn}, which increases with the sum of the degrees of the generating functions $\{g_1,\ldots,g_n\}$. This number has a close relation with the correlation of points in $W$.

   Suppose that we search for $\{g_1,g_2,g_3\}\in \RR[x_1,x_2,x_3]_{\leq 2}$   which vanishes at 8 given points in $\R^3$ and also satisfies~\Cref{ass:ginfty}. The number of coefficients defining a degree 2 polynomial of 3 variables is 10 and one vanishing point determines one linear equation on the  coefficients. Hence, if the number of linearly independent equations on the coefficients is $8$, which occurs if these 8 points are in general positions, it is impossible to have 3 linearly independent solutions. 
   Saying differently, to find  $\{g_1,g_2,g_3\}\in \RR[x_1,x_2,x_3]_{\leq 2}$   which vanishes at 8 given points in $\R^3$ and also satisfies~\Cref{ass:ginfty}, the 8 given points must be correlated and the maximal number of free points (i.e., points in general positions) is 7. 
   
   More generally,  the more free points in $W\subset \R^n$ there are, the larger the degree bound $\fn$ is.  To find $\{g_1,\ldots,g_n\}\subset \RR[x_1,\ldots,x_n]_{\leq r}$ which vanishes at $W$ and satisfies~\Cref{ass:ginfty}, the  number of free points in $W$ must be less than
   $$
   \binom{n+r}{n}-n.
   $$
   Let $W\subset \R^n$ be a subset of  $4^n$ points. If the  points in $W$ are in general positions, then $r$ is at least $n$ because $\binom{2n}{n}-n\leq 4^n$. In turn, the degree bound $\fn$ is at least $n^2-n$. In contrast, if the $4^n$ points are strongly correlated so that there are $\{g_1,\ldots,g_n\}\subset \RR[x_1,\ldots,x_n]_{\leq 4}$ which vanishes at $W$ and satisfies~\Cref{ass:ginfty}, then the degree bound $\fn$ can be reduced to $3n$. Hence, between points with correlation and points of random positions, the degree bound can range from $O(n)$ to $O(n^2)$.

\section{Conclusion and perspectives}\label{sec:perspec}

In this paper, we propose to study the effective degree bound under the assumption of nonexistence of solution at infinity. This geometric condition is characterized by the non-vanishing of the resultant of the homogeneous part of the highest degree of $g_1, \ldots,g_n$. Under this assumption which holds on a Zariski open set in the space of coefficients defining $g_1,\ldots,g_n$ of fixed degrees, we obtain an effective degree bound for the representation of $f-f_*$ in the quadratic module $\<g_1,\ldots,g_k>+\qm{h_0,h_1,\ldots,h_m}$.  This degree bound is explicitly computed from the degrees of $g_1,\ldots,g_k,h_1,\ldots,h_m$. As a direct application, we establish the first explicit degree bound for SOS representation over the gradient ideal under a generic condition. Our approach also brings some new insights for the study of  complexity for polynomial optimization. Our result confirms that the correlation of points in the feasible region has a direct impact on the degree bound, and suggests to reconsider the role played by  multiplicity in optimization. 

The geometric condition guarantees that $\{\bar g_1,\ldots,\bar g_n\}$ forms a regular sequence, from which we obtain the effective bound from the explicit form of  the Hilbert series of the quotient ring. However,~\Cref{ass:ginfty} is stronger than the regular sequence property of $\{\bar g_1,\ldots,\bar g_n\}$, see~\Cref{seca:thmdim}. Indeed, $\{\bar g_1,\ldots,\bar g_n\}$ is still a regular sequence if $\{g^{\infty}_1,\ldots,g^{\infty}_n\}$ has finitely many complex common zero locus. However, the H-basis property may fail if we only assume that  $\{\bar g_1,\ldots,\bar g_n\}$ is a regular sequence. It would be interesting to investigate the degree bound under the regular sequence condition only. 

In this paper we considered the case when 
 the number of equality constraints $k$ is at least $n$. When $k<n$, one may consider the equivalent polynomial optimization of restricted on the KKT ideal~\cite{DEMMEL2007189,Nie13Jacobian}:
\begin{equation}\label{a:minfconkkt}
\begin{array}{ll}
 \displaystyle \min_{\substack{x\in \R^n, \lambda \in \R^{k+m}}}~ & f(x) \\
 ~~\mathrm{s.t.} &  g_1(x)=\cdots=g_k(x)=0\\
&  \nabla f(x)= \sum_{i=1}^k \lambda_i \nabla g_i (x)+\sum_{j=1}^m \lambda_{k+j} h_j\\
& \lambda_{k+1}h_1(x)=\cdots=\lambda_{k+m}h_m(x)=0\\
& h_1(x)\geq 0,\ldots,h_m(x)\geq 0\\
& \lambda_{k+1}\geq 0,\ldots, \lambda_{k+m}\geq 0
 \end{array}
 \end{equation}
 Note that the number of variables in~\eqref{a:minfconkkt} is $n+m+k$ and the number of equality constraints  is also $n+m+k$. However, unlike the gradient ideal,~\Cref{ass:ginfty} is rarely satisfied for the KKT ideal~\eqref{a:minfconkkt}. This suggests to consider other conditions for establishing degree bounds for SOS representation over the KKT ideal. 
 
 Finally, a more challenging question would be to improve the degree bound given in~\Cref{coro:gsos} by exploiting the relation between the objective function and the defining equations. It is also interesting to study  problems with sparsity structures for obtaining refined effective degree bound.

\begin{appendices}


\section{Proof of~\Cref{l:nonsingular}}
Hereinafter, $\bf{i}$ denotes the imaginary unit.

\begin{proof}[Proof of~\Cref{l:nonsingular}.]
 Assume w.l.o.g. that $ v_1=\cdots= v_{n-1}=0$ and $ v_n=\bf{i}$ and the maximal ideal is
$\fm=\<x_1,\ldots,x_{n-1}, x_n^2+1>$. (The case when $v$ is  real  can be proved in a similar way by considering the maximal ideal $\fm=\<x_1,\ldots,x_{n-1}, x_n>$.) 

 Let $p(x):=x_n^2+1$.   Since $\left[\begin{array}{ccc}
    \nabla g_1(v) & \cdots & \nabla g_k(v)  
\end{array}\right] $ has rank $n$, there exist 
$(a_1,\ldots,a_k)\in \RR^k$ and $(b_1,\ldots,b_k)\in \RR^k$ such that
$$
\left[\begin{array}{ccc}
    \nabla g_1(v) & \cdots & \nabla g_k(v)  
\end{array}\right] \left[\begin{array}{c}
     a_1+{\bf{i}} b_1 \\
     \vdots\\
     a_k+{\bf{i}} b_k
\end{array}\right]= \nabla p(v).
$$
Let  $h\in \< g_1,\ldots,g_k> $ be defined by:
$$
h(x):=\sum_{j=1}^k (a_j+b_j x_{n})g_j(x).
$$
It follows that $h-p\in \fm$ and 
$$
\nabla h(v)=\sum_{j=1}^k (a_j+{\bf{i}} b_j) \nabla g_j(v)=\nabla p(v). 
$$
Therefore, $h-p\in \fm^2$ and it follows that $p\in \fm^2+\<g_1,\ldots,g_k>$. In a similar way, we can show that $x_i \in \fm^2+\<g_1,\ldots,g_k>$ for each $i\in [n-1]$. Thus $\fm=\fm^2+\<g_1,\ldots,g_k>$ and it follows that  
$\fm=\fm^\ell+\<g_1,\ldots,g_k>$ for any $\ell\geq 2$.
\end{proof}

\section{Proof of~\Cref{l:squarerootlemma01}}
In this section prove~\Cref{l:squarerootlemma01}. We start by proving a general result about taking square root in the quotient  of a ring of polynomials $R[x_1,\ldots,x_n]$ where $R$ is an arbitrary ring. A \textit{unit} element $b\in R$ is an element such that $bw=1$ for some $w\in R$.
\begin{lemma}\label{l:squareroot}
Let $R$ be a ring and $R[x_1,\ldots,x_n]$ be the ring of polynomials of $n$ variables with coefficients in $R$. Denote by $\langle x_1,\ldots,x_n\rangle$ the ideal of $R[x_1,\ldots,x_n]$ generated by $x_1,\ldots,x_n$.
Let $p\in R[x_1,\ldots,x_n]$ such that there is 
a unit $b\in R$ satisfying $b^2=p(0,\ldots,0)$.
Then for any $\ell\geq 1$, there is $q\in R[x_1,\ldots,x_n]$  such that 
$
p=q^2
$ in $R[x_1,\ldots,x_n]/\langle x_1,\ldots,x_n\rangle ^\ell$.
\end{lemma}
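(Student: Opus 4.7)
I would prove this by a Hensel/Newton-style induction, successively improving an approximate square root one order at a time. Write $\mathfrak{a} := \langle x_1,\ldots,x_n\rangle$. The key inductive claim is that for every $j\geq 1$ there exists $q_j\in R[x_1,\ldots,x_n]$ with $q_j(0,\ldots,0)=b$ and $p-q_j^2\in \mathfrak{a}^j$; taking $q:=q_k$ then proves the lemma.

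The base case $j=1$ is immediate: set $q_1:=b$, so $p-q_1^2=p-p(0,\ldots,0)\in\mathfrak{a}$, and $q_1(0,\ldots,0)=b$. For the inductive step, suppose $q_j$ is built; I would seek a correction $s\in\mathfrak{a}^j$ such that $q_{j+1}:=q_j+s$ satisfies $p-q_{j+1}^2\in\mathfrak{a}^{j+1}$. Expanding,
\[
p-q_{j+1}^2 \;=\; (p-q_j^2) \;-\; 2q_j s \;-\; s^2.
\]
Since $s\in\mathfrak{a}^j$ and $j\geq 1$, we have $s^2\in\mathfrak{a}^{2j}\subset\mathfrak{a}^{j+1}$, so it suffices to find $s\in\mathfrak{a}^j$ with $2q_j s\equiv r_j\pmod{\mathfrak{a}^{j+1}}$, where $r_j:=p-q_j^2\in\mathfrak{a}^j$.

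To solve this congruence I would argue that $2q_j$ is a unit in $R[x_1,\ldots,x_n]/\mathfrak{a}^{j+1}$. Indeed, $q_j$ has residue $b$ modulo $\mathfrak{a}$, which is a unit of $R$ by hypothesis, and the ideal $\mathfrak{a}/\mathfrak{a}^{j+1}$ is nilpotent in the quotient, so writing $q_j=b+n$ with $n$ nilpotent mod $\mathfrak{a}^{j+1}$ the element $q_j$ is inverted by the finite geometric series $b^{-1}\sum_{i\geq 0}(-b^{-1}n)^i$. The factor of $2$ is handled by the tacit assumption (needed in any case, as the example $R=\mathbb{F}_2$ shows) that $2$ is a unit in $R$; this holds in every application in the paper since $R$ will be an $\mathbb{R}$-algebra. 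Then $\tilde s := r_j\cdot(2q_j)^{-1}$ lies in $\mathfrak{a}^j/\mathfrak{a}^{j+1}$ because $r_j\in\mathfrak{a}^j$ and $(2q_j)^{-1}$ is a unit, and any lift $s\in\mathfrak{a}^j\subset R[x_1,\ldots,x_n]$ of $\tilde s$ does the job. Note that $q_{j+1}(0,\ldots,0)=q_j(0,\ldots,0)=b$ since $s\in\mathfrak{a}^j$ with $j\geq 1$, so the inductive hypothesis propagates.

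The only genuine obstacle is invertibility of $2q_j$ in the quotient, and this is the whole content of the step; everything else is a bookkeeping of ideal powers. The proof is self-contained and does not involve any other machinery from the paper.
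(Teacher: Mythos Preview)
Your proposal is correct and is essentially the same Hensel-type argument as the paper's proof: the paper carries out your inductive step by writing down the coefficients $b_\alpha$ of $q$ explicitly via the recursion $b_\alpha=(2b)^{-1}\bigl(a_\alpha-\sum_{\beta_1+\beta_2=\alpha,\;\beta_i\neq 0} b_{\beta_1}b_{\beta_2}\bigr)$, which is exactly your correction $s=(2q_j)^{-1}r_j$ spelled out monomial by monomial (inverting only the constant term $b$ rather than all of $q_j$, which amounts to the same thing modulo $\mathfrak{a}^{j+1}$). Your observation that $2$ must be a unit in $R$ is correct and is also tacitly assumed in the paper, where the factor $\tfrac{1}{2}$ appears in the recursion.
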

\begin{proof}
Let $\{a_{\alpha}: \alpha \in \NN^n\}$ such that $$p(x)=\sum_{\alpha \in \NN^n} a_{\alpha} x^{\alpha}.$$
In the above, $x^\alpha$ is the monomial $x^{\alpha_1}_1\cdots x^{\alpha_n}_n$. We denote by $|\alpha|$ the degree of the monomial $x^\alpha$, i.e., $|\alpha|=\alpha_1+\cdots+\alpha_n$.
Let any $\ell\geq 1$. 
Let $\{b_{\alpha}: \alpha\in \NN^n, |\alpha|\leq \ell-1\}$ be defined by  the following recursive system:
\begin{align*}
 & b_{{\bf{0}}}=b,\enspace \\
 &
  b_{\alpha}=\frac{1}{2}b_{{\bf{0}}}^{-1}\left(a_{\alpha}-\sum_{\substack{(\beta_1, \beta_2)\in \NN^n\times \NN^n\\ \beta_1+\beta_2=\alpha\\ \beta_1,\beta_2 \neq {\bf{0}}} } b_{\beta_1}b_{\beta_2}\right),\enspace \alpha \in \NN^n,  \alpha \neq {\bf{0}}, |\alpha|\leq \ell-1.
\end{align*}
Furthermore, let $b_{\alpha}=0$ for all $\alpha$ such that $|\alpha|\geq \ell$.
It is easy to verify that 
$$
a_{\alpha}=\sum_{\substack{(\beta_1, \beta_2)\in \NN^n\times \NN^n\\ \beta_1+\beta_2=\alpha}} b_{\beta_1}b_{\beta_2},\enspace   \alpha\in \NN^n, |\alpha|\leq \ell-1.
$$
Let $q\in R[x_1,\ldots,x_n]$ be defined by $
q(x)=\sum_{\alpha \in \NN^n} b_{\alpha} x^{\alpha}.
$
Then 
\begin{align*}&
p(x)-q(x)^2\\&=\sum_{\substack{\alpha \in \NN^n\\ |\alpha| < \ell}  } \left(a_{\alpha}-\sum_{\substack{(\beta_1, \beta_2)\in \NN^n\times \NN^n\\ \beta_1+\beta_2=\alpha}} b_{\beta_1}b_{\beta_2} \right) x^{\alpha}+\sum_{\substack{\alpha\in \NN^n \\ |\alpha|\geq \ell}}   \left(a_{\alpha}-\sum_{\substack{(\beta_1, \beta_2)\in \NN^n\times \NN^n\\ \beta_1+\beta_2=\alpha}} b_{\beta_1}b_{\beta_2} \right) x^{\alpha}
\\
&=\sum_{\substack{\alpha\in \NN^n\\ |\alpha|\geq \ell}}   \left(a_{\alpha}-\sum_{\substack{(\beta_1, \beta_2)\in \NN^n\times \NN^n\\ \beta_1+\beta_2=\alpha}} b_{\beta_1}b_{\beta_2} \right) x^{\alpha}\in \langle x_1,\ldots,x_n\rangle ^\ell.
\end{align*}
\end{proof}

\begin{lemma}\label{l:sqC}
Let $p\in \R[x]$ be a one-variable polynomial such that $p({\bf{i}})\neq 0$. Then for any $\ell\geq  1$, there is a unit $q\in \R[x]/\langle x^2+1\rangle ^\ell $ such that $p=q^2$ in $\R[x]/\langle x^2+1\rangle ^\ell$.
\end{lemma}
\begin{proof}
Let any $\ell\geq 1$. By~\Cref{l:squareroot}, there is a polynomial $u\in \R[x]$ such that $u^2(x)=1-x$ in $\R[x]/\<x>^{\ell}$.  Since $u(0)=1$, we know that $1-u(x)\in \<x>$ and so $1-u(x^2+1)\in \<x^2+1>$.
It follows that
\begin{equation}\label{eq:1minus}
\left( \sum_{i=0}^{\ell-1} \left(1-u(x^2+1)\right)^{i}  \right) u(x^2+1) =1-\left(1-u(x^2+1)\right)^{\ell} \in 1+\<x^2+1>^\ell.
\end{equation}
Since 
$
u^2(x^2+1) =-x^2 
$ in $\R[x]/\<x^2+1>^{\ell}$, we have
\begin{equation*}
\begin{aligned}
&x^2 \left(\sum_{i=0}^{\ell-1} \left(1-u(x^2+1)\right)^{i}\right)^2   {\in} -u^2(x^2+1) \left(\sum_{i=0}^{\ell-1} \left(1-u(x^2+1)\right)^{i}\right)^2 +\<x^2+1>^\ell.
\end{aligned}
\end{equation*}
Using~\eqref{eq:1minus}, we obtain:
\begin{equation}\label{eq:oneplusx2}
1+x^2 \left(\sum_{i=0}^{\ell-1} \left(1-u(x^2+1)\right)^{i}\right)^2 \in \<x^2+1>^\ell
\end{equation}
Since $({\bf{i}} u(z))^2+1 \in z+\<z>^{\ell}$, there is an $\RR$-algebra homomorphism $\phi:\R[x]/\langle x^2+1\rangle ^\ell \rightarrow \CC[z]/\<z>^\ell$ such that: 
$$
\phi(x)={\bf{i}}u(z).
$$
In view of~\eqref{eq:oneplusx2},
there is an $\RR$-algebra homomorphism $\psi: \CC[z]/\<z>^\ell\rightarrow \R[x]/\langle x^2+1\rangle ^\ell $ such that:
$$
\psi(z)=x^2+1,\enspace \psi({\bf{i}})= x  \left(\sum_{i=0}^{\ell-1} \left(1-u(x^2+1)\right)^{i}\right).
$$
We also check that
$$
\psi\circ \phi(x)= x  \left( \sum_{i=0}^{\ell-1} \left(1-u(x^2+1)\right)^{i}  \right) u(x^2+1)\in x+ \<x^2+1>^{\ell}.
$$
Hence $\psi\circ \phi(p)=p$ in $\R[x]/\<x^2+1>^\ell$.
 Let $h:=\phi(p)\in \CC[z]/\<z>^\ell$. Since $p({\bf{i}})\neq 0$, we have $h(0)\neq 0$. Applying~\Cref{l:squareroot} with $R=\CC$, we deduce that there is $w\in \CC[z]$ such that $w^2=h$ in $\CC[z]/\<z>^\ell$. {Furthermore}, since $w(0)\neq 0$, a similar argument shows that $w$ is a unit in $\CC[z]/\<z>^\ell$. 
 Let $q=\psi(w)$. Then $q^2=\psi(w^2)=\psi(h)=\psi\circ \phi (p)=p$ in $\R[x]/\<x^2+1>^\ell$.  
\end{proof}

Now we are ready to show~\Cref{l:squarerootlemma01}.

\begin{proof}[Proof of~\Cref{l:squarerootlemma01}.]
 Consider the case {where} $v$ is real. Without loss of generality, we may assume that  $v=(0,\ldots,0)$ and $$\fm=\langle x_1,\ldots,x_n\rangle.$$ 
Since $p({0,\ldots,0})>0$, $\sqrt{p({0,\ldots,0})}$ is a unit in $\RR$, it suffices to apply~\Cref{l:squareroot} with $R=\RR$.

Consider the case where $v$ is not a real point. Without loss of generality, we may assume that $v=(0,\ldots,0,{\bf{i}})$ and  $$\fm=\langle x_1,\ldots,x_{n-1},x^2_n+1\rangle.$$ Let $\mathbb{B}=\R[x_n]/\langle x_n^2+1\rangle ^\ell$. 
Then $\RR[x_1,\ldots,x_n]/\fm^\ell$ is a quotient ring of $$\mathbb{B}[x_1,\ldots,x_{n-1}]/\langle x_1,\ldots,x_{n-1}\rangle ^\ell.$$ Let $\{a_{\alpha}: \alpha\in \NN^{n-1}\}\subset \RR[x_n]$ such that 
 $p(x_1,\ldots,x_n)=\sum_{\alpha\in \NN^{n-1}} a_{\alpha} x_1^{\alpha_1}\cdots x_{n-1}^{\alpha_{n-1}} $. Since $p(0,\ldots,0,{\bf{i}})\neq 0$,  we have $a_{0,\ldots,0}({\bf{i}})\neq 0$. By~\Cref{l:sqC}, there is $b\in \mathbb{B}$ such that $a_{0,\ldots,0}=b^2$ and $b$ is a unit in $\mathbb{B}$.  It remains to apply~\Cref{l:squareroot} with $R=\mathbb{B}$. 
 \end{proof}
 Finally,~\Cref{l:squarerootlemma-1} is a special case of~\Cref{l:squarerootlemma01}.
 
\begin{proof}[Proof of~\Cref{l:squarerootlemma-1}.]
We apply~\Cref{l:squarerootlemma01} with $p$ being the constant function $-1$.
\end{proof}

\section{Proof of~\Cref{thm:dimSI}}\label{seca:thmdim}
We complete the sketched proof provided in~\cite{MollerHbasis} for
\Cref{thm:dimSI}. For references
of materials in this section, see~\cite{AtiyahMac,Eisenbudbook,CoxLittleOshea15}.

\subsection{Preliminaries}  Recall that $\bS=\R[x_0,x_1,\ldots,x_n]$. 
Let $\bI$ be an ideal of $\bS$. 
The associated primes of $\bI$ are the prime ideals which occur as radicals of annihilators of nonzero elements of $\bS/\bI$.
The minimal primes of  $\bI$  are  the minimal elements in the set of all prime ideals which contains $\bI$.   

The (Krull) \textit{dimension} of the quotient ring $\bS/\bI$, denoted by $\dim \left(\bS/\bI \right)$, is the supremum of the lengths of chains of prime ideals in $\bS$ containing $\bI$. This dimension coincides with the dimension of the  complex variety in $\CC^{n+1}$ associated with the ideal $\bI$.

The \textit{codimension} of a prime ideal $I$, is the supremum of the lengths of chains of primes descending from $I$. The \textit{codimension} of $\bI$,  is the minimum of the codimensions of the prime ideals containing $I$, and hence is the minimum of the codimensions of the minimal primes of $I$.
We have $\codim (\bI)=n+1-\dim\left(\bS/\bI\right)$.
\begin{theorem}[Unmixedness Theorem]\label{thm:unmixed}\cite[Corollary 18.14]{Eisenbudbook}
Let $h_1,\ldots,h_r\in \bS$.
If $\bI:=\langle h_1,\ldots,h_r\rangle$ is an ideal of  $\bS$  such that $\codim(\bI)=r$, then all the minimal primes of $\bI$ have codimension $r$ and every associated prime of $\bI$ is minimal over $I$.
\end{theorem}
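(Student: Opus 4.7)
The plan is to derive the Unmixedness Theorem from two general facts about the polynomial ring $\bS = \R[x_0,\ldots,x_n]$: Krull's Height Theorem, and the Cohen--Macaulay property of regular rings.

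First I would dispatch the statement about minimal primes. By Krull's Height Theorem, any minimal prime over an ideal generated by $r$ elements has codimension at most $r$. Combined with the hypothesis $\codim(\bI(r)) = r$, which by definition is the infimum of codimensions of primes containing $\bI(r)$, every minimal prime $\fp$ over $\bI(r)$ must satisfy $\codim(\fp)\leq r$ from Krull and $\codim(\fp)\geq r$ from the hypothesis. Hence all minimal primes have codimension exactly $r$.

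Next I would handle the associated-prime (``no embedded primes'') part. The key is that $\bS$ is a regular ring, hence Cohen--Macaulay, so the codimension of any ideal equals the depth of $\bS$ with respect to that ideal. Under the assumption $\codim(\bI(r)) = r$ together with $\bI(r)$ being generated by exactly $r$ elements $h_1,\ldots,h_r$, a standard characterization of Cohen--Macaulay local rings implies that (after localizing at any minimal prime of $\bI(r)$) the generators $h_1,\ldots,h_r$ form a regular sequence. Globally, one then shows that $\bS/\bI(r)$ is Cohen--Macaulay: its depth at every maximal ideal matches its dimension. Since a Cohen--Macaulay ring has no embedded associated primes of zero, every associated prime of $\bI(r)$ in $\bS$ is necessarily a minimal prime over $\bI(r)$.

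The main obstacle is the clean passage from the codimension hypothesis to a regular sequence, together with the resulting Cohen--Macaulayness of $\bS/\bI(r)$. One could instead argue more directly by induction on $r$: for $r=1$, the statement follows from the fact that principal ideals in a UFD have no embedded primes; for the inductive step, one factors out $h_r$ (which is a nonzerodivisor modulo $\langle h_1,\ldots,h_{r-1}\rangle$ by the codimension jump) and applies Krull's Height Theorem together with the induction hypothesis to control the associated primes. Either route ultimately rests on the Cohen--Macaulay property of the polynomial ring, which is what prevents embedded components from appearing in complete intersections.
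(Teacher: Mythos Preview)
The paper does not give its own proof of this statement: it is quoted directly as \cite[Cor 18.14]{Eisenbudbook} and used as a black box (in the proof of \Cref{prop:saturation} and in the proof of \Cref{thm:Hilbertseries}). Your sketch is correct and is essentially the argument behind Eisenbud's Corollary~18.14: Krull's Height Theorem pins the codimension of every minimal prime at exactly $r$, and the Cohen--Macaulay property of the polynomial ring $\bS$ forces $h_1,\ldots,h_r$ to be a regular sequence (locally at any prime containing $\bI(r)$), so that $\bS/\bI(r)$ is Cohen--Macaulay and therefore has no embedded primes. One small imprecision: the localization in your regular-sequence step should be at a prime \emph{containing} $\bI(r)$ (e.g.\ a maximal ideal), not at a minimal prime of $\bI(r)$; otherwise the quotient has dimension zero and the depth argument is vacuous. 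With that correction, both your main route and your inductive alternative are sound and match the standard treatment.
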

 A polynomial $p\in \bS$ is a \textit{zero-divisor} of $\bS/\bI$ if  there is  $q\notin \bI$ such that $pq\in \bI$.
 A sequence of elements $h_1,\ldots,h_r\in \bS$ is a \textit{regular sequence} on $\bS$ if
\begin{itemize}
    \item[1.] $\langle h_1,\ldots,h_r\rangle \neq \bS$;
    \item[2.] for each $i=1,\ldots,r$,  $h_{i}$ is  not a zero-divisor of $\bS/\langle h_1,\ldots,h_{i-1}\rangle$. 
\end{itemize}
Let $\bI$ be a homogeneous ideal of $\bS$. 
 The \textit{Hilbert series} of $\bS/\bI$ is the formal power series in one variable $t$ given by 
 $$
 H_{\bS/\bI}(t)=\sum_{d=0}^{\infty} \left(\dim_{\R} \bS_d/\bI_d \right) t^d.
 $$
\begin{theorem}\cite[Exercise 7, P.137]{Froberg98}\label{thm:Hilbertseries}
Let $h_1,\ldots,h_r \in \bS$ be $r$ homogeneous polynomials of $n+1$ variables. Let $\bI$ be the homogeneous ideal of $\bS$ generated by $h_1,\ldots,h_r$. 
If $\{h_1,\ldots,h_r\}$ is a regular sequence on $\bS$, then  
$$
H_{\bS/\bI}(t)=\frac{ \displaystyle\prod_{i=1}^r \left(  \displaystyle \sum_{j=1}^{\deg({h_i})} t^{j-1} \right)}{\left(1-t\right)^{n+1-r}}.
$$
\end{theorem}

\begin{theorem}\label{thm:completeintersection} 
Let $h_1,\ldots,h_r \in \bS$ be $r$ homogeneous polynomials of $n+1$ variables. Let $\bI:=\langle h_1,\ldots,h_r\rangle$ be the ideal of~$\bS$ generated by $h_1,\ldots,h_r$. 
If $\codim \left(\bI\right)=r$,  then $h_1,\ldots, h_r$ is a regular sequence on $\bS$.
\end{theorem}
\Cref{thm:completeintersection} is a standard result in algebraic geometry, though we did not find a reference which contains exactly the same statement. For the sake of completeness we include   a proof of this theorem. 
 
\begin{proof}[Proof of~\Cref{thm:completeintersection}.]
For each $t\in [r]$, denote by $\bI(t)$ the ideal of $\bS$ generated by $h_1,\ldots,h_t$.  Since $\bI(t+1)=\bI({t})+\langle h_{t+1}\rangle$ for each $t\in [r-1]$, we have
$$
\codim (\bI(t+1))\leq \codim (\bI(t))+1,\enspace \forall t\in [r-1]. 
$$
Since $\codim(\bI(1))=1$ and $\codim(\bI(r))=r$, we have $\codim(\bI(t))=t$ for each $t\in [r]$. 
Suppose that there is some $t\in [r-1]$ such that  $h_{t+1}$ is a zero-divisor of $\bS/\bI(t)$. Then $h_{t+1}$ is in an associated prime $p$ of $\bI(t)$, by~\cite[Proposition 4.7]{AtiyahMac}. 
Thus $\bI(t+1)=\bI(t)+\langle h_{t+1}\rangle \subset p$. It follows that
$\codim \left(\bI(t+1)\right)\leq \codim (p)$.  Since  $\bI(t)=\langle h_1,\ldots,h_t\rangle$ is an ideal of codimension $t$ generated by $t$ elements, by~\Cref{thm:unmixed}, the associated prime $p$ of $\bI(t)$ is  a minimal prime  of $\bI(t)$ and
$\codim(p)=\codim (\bI(t))= t$.  
This contradicts with $\codim(\bI(t+1))=t+1$. Therefore, $h_1,\ldots,h_r$ is a regular sequence.  
\end{proof}

\subsection{Proof of~\Cref{thm:dimSI}}
An immediate consequence of~\Cref{ass:ginfty} is the following key lemma.
\begin{lemma}
Under~\Cref{ass:ginfty}, $\{\bar g_1,\ldots,\bar g_n, x_0\}$ is a regular sequence on $\bS$. 
\end{lemma}
\begin{proof}
The complex variety in $\CC^{n+1}$ defined by 
$$
\{z\in \CC^{n+1}: \bar g_1(z)=\cdots =\bar g_n(z)=0, z_0=0\}
$$
is the origin point $\{{\bf{0}}\}\subset \CC^{n+1}$. Hence the codimension of the ideal generated by $\{\bar g_1,\ldots, \bar g_n, x_0\}$ is $n+1$. It remains to apply~\Cref{thm:completeintersection}.
\end{proof}

Hereinafter, we denote by $\bI$ the homogeneous ideal of $\bS$ generated by the homogeneous polynomials $\bar g_1,\ldots ,\bar g_n$. 
Simply using the definition of regular sequence,
we immediately have the following results. 
\begin{corollary}\label{coro:completeintersection}
Under~\Cref{ass:ginfty}, $\{\bar g_1,\ldots,\bar g_n\}$ is a regular sequence on $\bS$. 
\end{corollary}
\begin{corollary}\label{coro:hbasis}
Under~\Cref{ass:ginfty}, if $p\in \bS$ satisfies $x_0 p \in \bI$, then $p\in \bI$. 
\end{corollary}
\begin{proof}[Proof of~\Cref{thm:dimSI}.]
Let $h\in \<g_1,\ldots,g_n>$ and $\lambda_1,\ldots,\lambda_n\in \RR[x_1,\ldots,x_n]$ such that
$$
h=\sum_{i=1}^n \lambda_i g_i.
$$
Since \begin{align*}
\ol{h}\left(x_0,\ldots,x_n\right)&=x_0^{\deg(h)} h\left(\frac{x_1}{x_0},\ldots, \frac{x_n}{x_0}\right)\\&=x_0^{\deg(h)}\sum_{i=1}^n \lambda_i\left(\frac{x_1}{x_0},\ldots,\frac{x_n}{x_0}\right)g_i\left(\frac{x_1}{x_0},\ldots,\frac{x_n}{x_0}\right),
\end{align*}
  there is an integer $\ell \geq 0$ such that
$
x_0^{\ell} \ol{h} \in \bI.
$
By~\Cref{coro:hbasis}, $\ol{h}\in \bI$. Consequently, there are homogeneous polynomials $\bar \lambda_1,\ldots,\bar\lambda_n\in \bS$ such that 
\begin{equation}\label{a:barh}
\ol{h}=\sum_{i=1}^n \bar \lambda_i \ol{g}_i.
\end{equation}
Since $\ol{h}$ is homogeneous, the $\bar \lambda_1,\ldots,\bar\lambda_n$ in~\eqref{a:barh} can be selected such that each $\ol{\lambda_i}$ is  either 0, or homogeneous and $\deg(\ol{\lambda_i} \ol{g_i})=\deg(\ol{h})$, by~\cite[Exercise 2a, \S 8.3]{CoxLittleOshea15}.   Letting $\lambda_i(x_1,\ldots,x_n)=\bar\lambda_i(1,x_1,\ldots,x_n)$ and dehomogenizing both sides of~\eqref{a:barh} we obtain
$$
h=\sum_{i=1}^n \lambda_i g_i.
$$
 For each $i\in [n]$, we have either $\lambda_i=0$, or $\deg(\lambda_i g_i)\leq  \deg(\bar \lambda_i \bar g_i) =\deg(\bar h)=\deg(h)$. In both cases, we have $\deg(\lambda_i g_i)\leq \deg(h)$.
Hence $\{g_1,\ldots,g_n\}$ forms an H-basis. 

Applying~\Cref{coro:completeintersection} and~\Cref{thm:Hilbertseries}, the Hilbert series of the quotient ring $\bS/ \bI$ is
\begin{align*}
H_{\bS/\bI}(t)=
\frac{ \displaystyle\prod_{i=1}^n \left(  \displaystyle \sum_{j=1}^{\deg(g_i)} t^{j-1} \right)}{1-t}=\left(\displaystyle\prod_{i=1}^n \left(  \displaystyle \sum_{j=1}^{\deg(g_i)} t^{j-1} \right)\right) \left(\sum_{j=0}^{+\infty} t^j\right)= \sum_{d=0}^{+\infty} \left(\sum_{j=0}^d \fc_j \right) t^d,
\end{align*}
where
\begin{equation}\label{a:adeffcd}
\fc_j:=\left |  \left\{ (k_1, \ldots, k_n ) \in \NN^n: k_1+\cdots+k_n=j,\enspace k_i <\deg(g_i),\enspace \forall i\in [n]\right\}\right |.
\end{equation}
The dimension of the vector space $\bS_{\fn}/\bI_{\fn}$ corresponds to the  coefficient of $t^{\fn}$ in the Hilbert series. It is equal to 
$$
\sum_{j=0}^{\fn}\fc_{j}=\prod_{i=1}^n \deg(g_i).
 $$
 We then deduce~\eqref{a:facd}. 
\end{proof}

{
\section{Proof outline for~\Cref{thm:L04}}\label{sec:proofL04}
Let $r=\rank (M_t(\Lambda))$. First, by~\cite{CurtoFialkow96}, there exists $\{v_1,\ldots,v_r\}\subset \R^n$ and  $a_1>0,\ldots,a_r>0$ such that
\begin{equation}\label{eq:Lambda2ts}
\Lambda_{2(t+s)}
=\sum_{i=1}^r a_i [v_i]_{2(t+s)}.
\end{equation}
In addition,  from $\<\Lambda,h_0>=1$ we know that $a_1+\cdots+a_r=1$.
It remains to show that $v_i\in \cF$. For this, we follow the same procedure as in the proof of~\cite[Theorem 1.6]{Laurent04}. We let $q_1,\ldots,q_r$ be the interpolation polynomials at the points $\{v_1,\ldots,v_r\}$ having degree at most $t$. To check that $h_i(v_j)\geq 0$, $\forall i\in [m]$, the proof is exactly the same as in~\cite[Theorem 1.6]{Laurent04}. To check the equality constraints, we note that $\deg(q_jg_i) \leq \deg(g_i)+t\leq 2(t+s)$ following the second condition in~\eqref{eq:scondition}, and therefore
$$
0=\langle\Lambda, q_j g_i \rangle=\langle \Lambda_{2(t+s)}, g_i q_j \rangle=a_j g_i(v_j),\enspace \forall i\in [k], j\in [r].
$$
Here, the first equality follows from $\Lambda \in \cL_{2d}$ and $\deg(q_jg_i)\leq 2d$, the second equality follows from $\deg(q_j g_i)\leq 2(t+s)$. The last equality follows from~\eqref{eq:Lambda2ts} and $\<[v_i]_{2(t+s)}, q_j>=1$ if  $i=j$ and 0 otherwise.
}

\end{appendices}

{
\textbf{ACKNOWLEDGEMENTS} {We would like to express our sincere gratitude to the referees for their valuable feedback and insightful comments, which have significantly contributed to the improvement of this work. Their careful review and constructive suggestions have been instrumental in enhancing the clarity and quality of our manuscript.}	
}

\bibliographystyle{my-plainst}
\bibliography{mybib}   

\end{document}